\renewcommand{\epsilon}{\varepsilon}
\newtheorem*{theorem*}{Theorem}
\theoremstyle{plain}
\newtheorem{theorem}{Theorem}[section]
\newtheorem{prop}[theorem]{Proposition}
\newtheorem{lemma}[theorem]{Lemma}
\theoremstyle{definition}
\newtheorem{remark}[theorem]{Remark}
\numberwithin{equation}{section}
\newcommand{\torus}{\mathbb{T}^n }
\newcommand{\nega}{ \psi(t)\hat{a}(t/\loga)}
\newcommand{\R}{\mathbb{R}}
\newcommand{\re}{ {\rm{Re }\, }}
\newcommand{\im}{ {\rm{Im }\, }}
\newcommand{\loga}{ {\varepsilon\log\lambda}}
\newcommand{\Z}{\mathbb{Z}}
\newcommand{\srho}{\tilde{\rho}(\lambda t)}
\newcommand{\local}{\mathfrak{S}_{loc}}
\newcommand{\coe}{{i(\lambda+i\mu)}}
\newcommand{\dis}{d_g(x,y)}
\titleformat*{\section}{\bf}
\titlespacing{\section}{0em}{2ex}{2ex}
\subjclass[2010]{Primary, 58J50; Secondary 35R01, 42C99}
\keywords{Resolvent estimates,  eigenfunctions, spectrum, curvature}
\author{Peng Shao }
\email{pshao@math.jhu.edu}
\address{Department of Mathematics, Johns Hopkins University, Baltimore, MD 21218, U.S.A.}
\author{Xiaohua Yao}
\email{yaoxiaohua@mail.ccnu.edu.cn}
\address{Department of Mathematics, Huazhong Normal University, Wuhan 430079, PR China}
\title[General sobolev resolvent $L^p$ estimates]
{ Uniform Sobolev Resolvent Estimates for the Laplace-Beltrami operator on Compact Manifolds}
\thanks{The second author is supported by Projects for New Century Excellent Talents in
University (NCET-10-0431) and the Special Fund for Basic Scientific Research of Central Colleges (No. CCNU12C01001)}
\begin{document}
\maketitle

\begin{abstract}
	In this paper we continue the study on the resolvent estimates of the
	Laplace-Beltrami operator $\Delta_g$ on a compact manifolds $M$ with dimension
	$n\geq3$. On
	the Sobolev line $1/p-1/q=2/n$ we can prove that the resolvent
	$(\Delta_g+\zeta)^{-1}$ is uniformly bounded from
	$L^p$ to $L^q$ when $(p,q)$ are within the range: $p\leq2(n+1)/(n+3)$ and
	$q\geq2(n+1)/(n-1)$ and $\zeta$ is outside a parabola opening to the right and a
	small disk centered at the origin.
	This naturally generalizes the previous results in
	\cite{Kenig} and \cite{bssy} which addressed only the special case when
	$p=2n/(n+2), q=2n/(n-2)$. 
	Using the shrinking spectral
	estimates between $L^p$ and $L^q$ we also show that when $(p,q)$ are  
	within the interior of the 
	range mentioned above, one can obtain a logarithmic improvement over the
	parabolic region for resolvent estimates on manifolds equipped with Riemannian
	metric of non-positive sectional curvature, and a power improvement depending on
	the exponent $(p,q)$ for flat torus.
	The latter therefore partially improves Shen's work in \cite{Shen} on the
	$L^p\to L^2$ 
	uniform resolvent estimates on the torus.
	Similar to the case as proved in \cite{bssy} when $(p,q)=(2n/(n+2),2n/(n-2))$, the
	parabolic region is also optimal over the round sphere $S^n$ when $(p,q)$  are now in the
	range.  
	However, we may ask if the  range is 
	 sharp in the sense that it is the only
	possible range on the Sobolev line for which a compact manifold can have uniform
	resolvent estimate for $\zeta$ being ouside a parabola. 
	
\end{abstract}

\section{Introduction}
Recall that in \cite{Kenig} (see also \cite{bssy}) Dos Santos Ferreira, Kenig and Salo
proved the following result concerning the resolvent
estimates on a compact boundaryless Riemannian manifold:
\begin{theorem}
	Let $(M,g)$ be a compact Riemannian manifold of dimension $n\geq 3$, and let
	$\lambda,|\mu|\geq1$. Then there exists a uniform constnat $C>0$ such that for
	all $f\in C^{\infty}(M)$ we have the following resolvent estimate
	\begin{equation}
		||f||_{L^{\frac{2n}{n-2}}(M)}\leq
		C||(\Delta_g+(\lambda+i\mu)^2)f||_{L^{\frac{2n}{n+2}}(M)}.
		\label{Kenig_estimate}
	\end{equation}
	\label{Kenig.theorem}
\end{theorem}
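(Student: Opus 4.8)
The plan is to remove an elliptic regime, reduce the remainder to a single frequency-localized estimate, and then attack that estimate by the half-wave method. Write $P=\sqrt{-\Delta_g}$ and $w=\lambda+i\mu$, so that \eqref{Kenig_estimate} is the operator bound $\|(\Delta_g+w^{2})^{-1}\|_{L^{p}(M)\to L^{q}(M)}\le C$ with $p=\tfrac{2n}{n+2}$, $q=\tfrac{2n}{n-2}$ (so $\tfrac1p-\tfrac1q=\tfrac2n$ and $p'=q$), uniformly for $\lambda,|\mu|\ge1$; the inverse exists since $\im w^{2}=2\lambda\mu\neq0$, and we may take $\mu>0$. If $\mu\ge2\lambda$, then $\Delta_g+w^{2}=w^{2}-P^{2}$ is elliptic at scale $\mu$: its symbol satisfies $|w^{2}-\tau^{2}|\gtrsim\tau^{2}+\mu^{2}$, so $(\Delta_g+w^{2})^{-1}$ equals $(-\Delta_g+\mu^{2})^{-1}$ composed with a uniformly bounded Mikhlin multiplier, and $(-\Delta_g+\mu^{2})^{-1}$ is bounded $L^{p}\to L^{q}$ uniformly in $\mu\ge1$ because its Schwartz kernel is dominated by the Riesz potential of order $2$ (to which Hardy--Littlewood--Sobolev applies). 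Hence we may assume $1\le\mu\le2\lambda$, so that $|w|\sim\lambda$. Decomposing the spectrum of $P$ dyadically, on $\tau\ll\lambda$ the symbol $(w^{2}-\tau^{2})^{-1}$ is $O(\lambda^{-2})$ and its contribution is absorbed by Bernstein's inequality for spectral projectors, while on $\tau\gg\lambda$ it is comparable, together with all its derivatives, to $\tau^{-2}$ and its contribution is absorbed by the endpoint Sobolev embedding $W^{2,p}(M)\hookrightarrow L^{q}(M)$ (valid for $n\ge3$); both pieces are $O(1)$. On the remaining band $\tau\sim\lambda$, the partial fraction identity $\tfrac{1}{w^{2}-\tau^{2}}=\tfrac{1}{2w}\bigl(\tfrac{1}{w-\tau}+\tfrac{1}{w+\tau}\bigr)$ and one further application of Bernstein (to the term $\tfrac{1}{2w}\beta(P/\lambda)(w+P)^{-1}$, whose symbol is again $O(\lambda^{-2})$) reduce everything to proving
\[
\bigl\|\beta(P/\lambda)\,(w-P)^{-1}\bigr\|_{L^{p}\to L^{q}}\lesssim|w|
\]
for a fixed bump $\beta\in C^{\infty}_{c}((0,\infty))$.

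To prove this I would write the operator as a superposition of half-wave propagators,
\[
\beta(P/\lambda)(w-P)^{-1}=i\int_{0}^{\infty}e^{it\lambda}e^{-\mu t}\,\beta(P/\lambda)\,e^{-itP}\,dt ,
\]
and split the $t$-integral with a smooth cutoff $\beta_{0}$ equal to $1$ on $[0,c]$ and supported in $[0,2c]$, where $c$ is less than the injectivity radius of $(M,g)$. For the global-in-time part (the factor $1-\beta_{0}$) one evaluates the $t$-integral directly: the resulting multiplier is $w^{-1}\beta(\tau/\lambda)\,h_{\mu}(\lambda-\tau)$ with $h_{\mu}(s)=\int_{c}^{\infty}(1-\beta_{0}(t))\,e^{(is-\mu)t}\,dt$, and repeated integration by parts in $t$ — using $\mu\ge1$ and that $\beta_{0}\equiv1$ near $t=c$ — yields $|h_{\mu}(s)|\lesssim_{N}(1+|s|)^{-N}$ for every $N$. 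Splitting into unit spectral bands, this part is therefore a rapidly convergent average of the operators $\chi_{[\sigma,\sigma+1]}(P)$ with $\sigma$ near $\lambda$, and Sogge's spectral cluster estimate in the form $\|\chi_{[\sigma,\sigma+1]}(P)\|_{L^{p}\to L^{q}}\lesssim\sigma^{\,n(1/p-1/q)-1}=\sigma$ — which for $q=\tfrac{2n}{n-2}\ge\tfrac{2(n+1)}{n-1}$ follows by composing the $L^{2}\to L^{q}$ estimate with its dual $L^{p}\to L^{2}$ estimate, since $p'=q$ — gives a bound $O(\lambda)\le O(|w|)$, as required.

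For the local-in-time part (the factor $\beta_{0}$) the plan is to insert the Hadamard geometric-optics parametrix for $e^{-itP}$. Because of the frequency localization $\beta(P/\lambda)$ this realizes the operator as an oscillatory integral operator with large parameter $\lambda$, and stationary phase in the time and frequency variables exhibits its Schwartz kernel, modulo errors that are trivially $O(|w|)$, as the sum of a near-diagonal term comparable to $|w|\,d_g(x,y)^{-(n-2)}\mathbf 1_{\{d_g(x,y)\lesssim1/\lambda\}}$ and an oscillatory term comparable to $\lambda^{(n-1)/2}\,d_g(x,y)^{-(n-1)/2}\,e^{i\lambda d_g(x,y)}\,e^{-\mu d_g(x,y)}\,\mathbf 1_{\{1/\lambda\lesssim d_g(x,y)\lesssim c\}}$ (the factor $e^{-\mu d_g(x,y)}$ only helps). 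The near-diagonal term is a truncated Riesz potential of order $2$, bounded $L^{p}\to L^{q}$ with norm $O(|w|)$ by Hardy--Littlewood--Sobolev. For the oscillatory term I would decompose into geodesic shells $d_g(x,y)\sim2^{k}/\lambda$ with $1\le2^{k}\lesssim\lambda$; after rescaling by $\lambda$, the operator on the $k$-th shell is a classical oscillatory integral operator whose phase $d_g(x,y)$ satisfies the Carleson--Sj\"olin (rotational curvature) nondegeneracy away from the diagonal, so the oscillatory integral operator estimates of H\"ormander and Stein that underlie Sogge's eigenfunction bounds apply, giving a bound on the $k$-th shell that sums over $k$ to $O(|w|)$, uniformly in $\mu\ge1$. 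Assembling the global and local contributions with the reductions of the first paragraph then yields \eqref{Kenig_estimate}.

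I expect the decisive difficulty to be exactly this last step: organizing the Hadamard parametrix so that the local operator appears genuinely in oscillatory-integral-operator form with the correct amplitude, and then invoking — or, in this off-diagonal $L^{p}\to L^{q}$ setting, establishing — the sharp oscillatory integral bound with uniform control in the auxiliary parameter $\mu$ (equivalently, a uniform off-diagonal analogue of the spectral cluster estimate). The frequency decomposition, the elliptic regime, and the global-in-time contribution are by contrast soft and rest only on Sobolev embedding, Hardy--Littlewood--Sobolev, and the available spectral cluster inequalities; and it is the global-in-time, genuinely ``manifold'' contribution that forces $w$ outside the parabola — here encoded in the hypothesis $\mu\ge1$, which is precisely what makes $h_{\mu}$ decay rapidly.
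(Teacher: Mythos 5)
Your proposal is correct and follows essentially the same route as the paper: the half-wave representation of the resolvent, a splitting into a short-time (local) piece and a long-time remainder, Sogge's spectral cluster bounds composed $L^p\to L^2\to L^q$ (using $\delta(p)+\delta(q)=1$ on the Sobolev line) to sum the remainder over unit spectral bands, and the Hadamard parametrix with a near-diagonal Riesz-potential term plus dyadic shells $d_g(x,y)\sim 2^k/\lambda$ treated by Carleson--Sj\"olin oscillatory integral bounds for the local piece. The only step you leave implicit is that the Stein--Sogge oscillatory integral theorem gives the shell estimate on the line $1/q=\tfrac{n-1}{n+1}(1-1/p)$ rather than at $(2n/(n+2),2n/(n-2))$, so one must interpolate it with the trivial (Young's inequality) bound for the same kernel to land on the Sobolev line, after which the dyadic sum converges geometrically exactly as you claim.
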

Notice that if we write $\zeta=(\lambda+i\mu)^2$ then it is outside a small disk and a parabola opening to the right as
in the following figure:
\begin{center}
	\begin{tikzpicture}
\draw[domain=0.5:2.1, rotate=270,yshift=0.2cm] plot (\x,{0.3*(\x)^3});
\draw[domain=-2.1:-0.5, rotate=270,yshift=0.2cm] plot (\x,{0.3*(-\x)^3});
\draw (0,0) circle (0.55cm);
\draw[->](-2,0)--(5,0) node[above]{$\re\zeta$};
\draw[->](0,-2)--(0,2) node[right]{$\im\zeta$};
	\end{tikzpicture}
\end{center}

Dos Santos Ferreira, Kenig and Salo \cite{Kenig} used explicit Hadamard parametrix construction to obtain the
estimates above which is based on a classical representation of such parametrix in terms
of Bessel
functions. See also \cite{KRS}. Shortly after, Bougain, Sogge and us in \cite{bssy}
showed that estimate \eqref{Kenig_estimate} is sharp on round sphere. They also 
used half-wave operator $e^{it\sqrt{-\Delta_g}}$ and $\cos t\sqrt{-\Delta_g}$
to prove the equivalence between any possible improvement over the parabola and shrinking spectral
projection estimate of $\sqrt{-\Delta_g}$, and obtained some improvements on the torus and non-positive
curvature manifolds. In particular, using this technique they could obtain a 
shorter proof to Theorem \ref{Kenig.theorem}.

The specific $(p,q)$ pair appearing in \eqref{Kenig_estimate} is at the intersection of
the line of duality $1/p+1/q=1$ and the Sobolev line $1/p-1/q=2/n$. Interestingly in the current
paper we show that the line of duality does not play a significant role here, and
the parabolic boundary of the region is essentially the result of the Sobolev line. More
explicitly, we can prove that:
\begin{theorem}
	Let $M$ be a compact Riemannian manifold of dimension $n\geq 3$. Then, if
	$1/p-1/q=2/n$, we have the following uniform resolvent estimates
	\begin{equation}
		||f||_{L^{q}(M)}\leq C||(\Delta_g+(\lambda+i\mu)^2)f||_{L^p(M)}
		\label{main_estimate}
	\end{equation}
	if ${p\leq 2(n+1)}/{(n+3)}$ and $q\geq2(n+1)/(n-1)$,
	and $\lambda,|\mu|\geq1$. In particular, the constant $C$ does not depend on $\lambda,\mu$.
	\label{main_theorem}
\end{theorem}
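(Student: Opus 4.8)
The plan is to reduce the resolvent estimate to a single "endpoint" estimate at the corner $(p_0,q_0)=(2(n+1)/(n+3),2(n+1)/(n-1))$ and then recover the full range by interpolation. First I would observe that the pair $(p_0,q_0)$ still lies on the Sobolev line $1/p-1/q=2/n$, so it suffices to prove \eqref{main_estimate} for this one pair; the remaining $(p,q)$ on the Sobolev line with $p\le p_0$, $q\ge q_0$ follow by interpolating this estimate against the trivial case and using the $L^p$-boundedness of the resolvent away from the spectrum together with Sobolev embedding. (One must check that no genuinely new geometric input is needed off the corner — only real interpolation and Hölder on the compact manifold $M$.)

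For the corner estimate itself I would follow the spectral-cluster approach of \cite{bssy} rather than the Hadamard parametrix of \cite{Kenig}. Write $\zeta=(\lambda+i\mu)^2$ and decompose the resolvent $(\Delta_g+\zeta)^{-1}$ spectrally in $\sqrt{-\Delta_g}$, separating the piece where the spectral parameter is comparable to $\lambda$ (the "bulk" frequencies $|\tau-\lambda|\lesssim 1$, where the resolvent kernel is singular) from the far frequencies $|\tau-\lambda|\gtrsim 1$. On the far part the multiplier $(\tau^2+\zeta)^{-1}$ is harmless and is controlled by standard $L^p$–$L^q$ spectral projection (Sogge) estimates summed over dyadic annuli, using $|\mu|\ge1$ to get the geometric decay. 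The delicate bulk part reduces, via a $TT^*$/almost-orthogonality argument and the standard subordination of $(\Delta_g+\zeta)^{-1}$ to the half-wave operator $e^{it\sqrt{-\Delta_g}}$, to bounding the operators $\chi_\lambda$ projecting onto unit-width spectral windows from $L^{p_0}$ to $L^{q_0}$; here one invokes the sharp Sogge estimate $\|\chi_\lambda f\|_{L^{q_0}}\lesssim \lambda^{\sigma(p_0,q_0)}\|f\|_{L^{p_0}}$ with exponent $\sigma=n(1/p_0-1/q_0)-1 = 2-1 = 1$ matching the two derivatives gained, which is exactly what makes the corner $(p_0,q_0)$ work and no better (the exponent $q_0=2(n+1)/(n-1)$ is the one at which the Stein–Tomas/Sogge bound is sharp).

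The main obstacle I expect is the bulk term: controlling the local operator uniformly in $\mu$ down to $|\mu|\ge1$, where the imaginary part no longer helps and one is genuinely at the boundary of the region. This requires the careful local analysis — cutting off $e^{it\sqrt{-\Delta_g}}$ to $|t|$ small, using finite propagation speed to replace $M$ by $\R^n$, and then applying the Carleson–Sjölin/oscillatory-integral estimates that underlie Sogge's bounds at the exponent $q_0$. One must track that the $\mu$-dependence is absorbed and that the small disk around $\zeta=0$ is excluded precisely so the low-frequency part of the resolvent is bounded on $L^p$ by elliptic regularity plus Sobolev embedding. Assembling the far part, the bulk part, and the low-frequency part, and then interpolating off the corner, completes the proof.
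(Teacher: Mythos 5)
Your overall architecture (a local-in-time piece handled by finite propagation speed and Carleson--Sj\"olin oscillatory integral bounds, plus a spectral piece handled by Sogge's cluster estimates composed $L^p\to L^2\to L^q$) is the paper's architecture. But your opening reduction contains a genuine error. The pair $(p_0,q_0)=(2(n+1)/(n+3),\,2(n+1)/(n-1))$ is \emph{not} on the Sobolev line: $1/p_0-1/q_0=2/(n+1)\neq 2/n$. It is the corner of the half-plane constraints, but it lies on the line of duality, off the Sobolev line, so it is not an admissible pair of the theorem at all. The admissible set is a genuine segment $\overline{AA'}$ of the Sobolev line with two distinct endpoints $A=((n+3)/2(n+1),\,(n^2-n-4)/2n(n+1))$ and $A'=((n^2+3n+4)/2n(n+1),\,(n-1)/2(n+1))$, and it cannot be recovered by interpolating a single estimate at $(p_0,q_0)$ against $L^2\to L^2$ boundedness: that interpolation stays on the duality line and never meets $\overline{AA'}$ except (in the limit) at $F$, which lies beyond the corner. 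Relatedly, your exponent bookkeeping $\sigma=n(1/p_0-1/q_0)-1=1$ is computed as if $(p_0,q_0)$ were on the Sobolev line; at your corner the composed Sogge exponent is $\delta(p_0)+\delta(q_0)=(n-1)/(n+1)<1$. The paper needs no corner reduction: its Lemma \ref{lemma2.3.1} gives the unit-cluster bound $\|\alpha_k(P)\|_{L^p\to L^q}\lesssim k\sup|\alpha|$ simultaneously for \emph{every} pair on $\overline{AA'}$, precisely because $\delta(p)+\delta(q)=1$ there.

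A second, smaller gap is the order of your decomposition. If you cut the spectrum sharply at $|\tau-\lambda|\gtrsim 1$ first, the multiplier $(\tau^2-\zeta)^{-1}$ on the unit cluster at $\tau\approx k$ is only of size $\bigl(\lambda(1+|k-\lambda|)\bigr)^{-1}$, and against the cluster bound $\lesssim k$ the sum $\sum_k k\cdot\bigl(\lambda(1+|k-\lambda|)\bigr)^{-1}$ diverges like $\log\lambda$; the hypothesis $|\mu|\geq 1$ does not upgrade this polynomial decay to anything summable. The paper's fix is to cut in \emph{time} first: the nonlocal multiplier $r_{\lambda,\mu}(\tau)$ in \eqref{non_local_operator_definition} gains rapid decay $C_N\lambda^{-1}(1+|\lambda-\tau|)^{-N}$ by integration by parts, using that $1-\rho(t)$ is supported away from $t=0$ and $e^{-|\mu|t}$ with $|\mu|\geq1$ kills the boundary and growth terms; only then does the cluster sum \eqref{nonlocal.last} converge uniformly. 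You should therefore run the time splitting \eqref{local_operator_definition}--\eqref{non_local_operator_definition} before any spectral localization, and prove the estimate directly for every pair on $\overline{AA'}$ rather than at a single corner.
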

We follow our original way in \cite{bssy} to prove this theorem by splitting the
resolvent into short-time local part and long-time non-local remainder. The way we handle
the local part is similar to, and motivated by the work in \cite{Kenig} and
\cite{Sogge1} through using the 
Carleson-Sj\"{o}lin condition of an
oscillatory term which we did not use in \cite{bssy} since we concerned only the
$L^{\infty}$ norm of the kernel at that time. The main difference between our work and
\cite{Kenig} is the way we handle the remainder term,  whose $L^p$ norm on the Sobolev
line we are able to control by an argument using Sogge's 
spectral projection estimates.

The paper is organized as the following. As usual we interpret the resolvent
$(\Delta_g+(\lambda+i\mu)^2)^{-1}$ as a multiplier 
$-(\tau^2-(\lambda+i\mu)^2)^{-1}(P)$, in which and following on, $P$ denotes $\sqrt{-\Delta_g}$. We
then calculate the Fourier transform of this multiplier function, and use the half-wave
operator and Fourier inverse transform formula to write the resolvent as
\begin{equation}\begin{split}
	&\frac{\rm{sgn}\mu}{2i(\lambda+i\mu)}\int_{-\infty}^{\infty}e^{i(\rm{sgn}\mu)\lambda|t|}e^{-|\mu t|}e^{itP}dt
	=\frac{\rm{sgn}\mu}{\coe}\int_0^{\infty}e^{i(\rm{sgn}\mu)\lambda t}e^{-|\mu| t}\cos tPdt.
\end{split}
	\label{resolvent}
\end{equation}
Consequently we use a smooth function $\rho(t)$ supported near $t=0$ to split the resolvent in
\eqref{resolvent} into local and non-local parts:
\begin{equation}
	\local(P)
	=\frac{\rm{sgn}\mu}{i(\lambda+i\mu)}\int_0^{\infty}\rho(t)e^{i\lambda t}e^{-|\mu| t}\cos{tP}dt
\label{local_operator_definition}
\end{equation}
and
\begin{equation}
	r_{\lambda,\mu}(P)
	=\frac{\rm{sgn}\mu}{i(\lambda+i\mu)}\int_0^{\infty}(1-\rho(t))e^{i\lambda
	t}e^{-|\mu|
	t}\cos{tP}dt.
\label{non_local_operator_definition}
\end{equation}

In Section 3 we study the non-local operator
again in similar way as we did in \cite{bssy}, by breaking the spectrum of $P$ into
unit-length clusters and then estimate the $(p,q)$ norm of the multiplier $r_{\lambda,\mu}$ on
each piece with the help of Sogge's spectral estimates in, say \cite{Soggebook}. Our main
tool is Lemma \ref{lemma2.3} which is a variant of Lemma 2.3 in \cite{bssy}. The
difference between these two lemmas is that instead of the standard
$TT^*$ argument we now consider the composition of spectral projections from $L^p$ to
$L^2$ and from $L^2$ to $L^q$ in an asymmetric manner, which happens to behave well on the Sobolev line. By
combining the results for local and non-local operator together, the proof to Theorem
\ref{main_theorem} is therefore completed.

Similar to the importance of Lemma 2.3 in \cite{bssy}, our Lemma \ref{lemma2.3} can
immediately derive the following relation between shrinking spectral projection
$(p,q)$ estimates and the improved uniform resolvent estimates. Notice that unlike the
case on the line of duality, we are not able to prove the exact equivalence between them:
\begin{theorem}
	Let $M$ be a compact Riemannian manifold of dimension $n\geq3$.
	Suppose that for $2n(n+1)/(n^2+3n+4)\leq p\leq 2(n+1)/(n+3)$ we have a function
	$0<\varepsilon _p(\lambda)\leq1$ decreasing monotonically to $0$ as $\lambda\to\infty$
	and $\varepsilon_p(2\lambda)\geq\varepsilon_p(\lambda)/2$, for $\lambda$
	sufficiently large. Then if we have 
	\begin{equation}
		||\sum_{|\lambda-\lambda_j|\leq\varepsilon
		_p(\lambda)}E_jf||_{L^{p'}(M)}\leq
		C\varepsilon_p(\lambda)\lambda^{2\delta(p)}||f||_{L^p(M)},\lambda\gg1,
		\label{1.13}
	\end{equation}
	 we also have the following resolvent estimates for $1/p-1/q=2/n,
	p\leq2(n+1)/(n+3), q\geq2(n+1)/(n-1)$:
	\begin{equation}
		||f||_{L^q(M)}\leq C||(\Delta_g+(\lambda+i\mu)^2)f||_{L^p(M)},
		|\mu|\geq\max\left\{ \varepsilon_p(\lambda),\varepsilon _{q'}(\lambda)
		\right\}, \lambda\gg1.
		\label{1.14}
	\end{equation}
	\label{equivalence}
\end{theorem}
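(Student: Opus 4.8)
The plan is to use the short-time/long-time splitting $(\Delta_g+(\lambda+i\mu)^2)^{-1}=\local(P)+r_{\lambda,\mu}(P)$ of \eqref{local_operator_definition}--\eqref{non_local_operator_definition} and to bound each summand from $L^p$ to $L^q$ with a constant independent of $\lambda,\mu$ in the stated range. For $\local(P)$ there is nothing new to do: this is exactly the operator treated in the proof of Theorem \ref{main_theorem}. After the rescaling $t\mapsto t/\lambda$ its kernel is an oscillatory integral obeying the Carleson--Sj\"olin condition, so $\|\local(P)f\|_{L^q}\leq C\|f\|_{L^p}$ holds uniformly for $\lambda\gg1$ and every $\mu$, the damping factor $e^{-|\mu|t}\leq1$ being only helpful. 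Hence no spectral input is needed for the local part, and the entire content of \eqref{1.14} is the estimate $\|r_{\lambda,\mu}(P)f\|_{L^q}\leq C\|f\|_{L^p}$; this is where \eqref{1.13} enters, through Lemma \ref{lemma2.3}.

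Next I would record the structure of the symbol of $r_{\lambda,\mu}(P)$. Writing $\cos(t\tau)=\tfrac12(e^{it\tau}+e^{-it\tau})$ in \eqref{non_local_operator_definition} and integrating by parts in $t$ (the boundary term at $t=0$ vanishes since $1-\rho$ is flat there, and at $t=+\infty$ because of $e^{-|\mu|t}$), one finds, for $\tau\geq0$, $r_{\lambda,\mu}(\tau)=c\,\lambda^{-1}J(\lambda-\tau)\bigl(i(\lambda-\tau)-|\mu|\bigr)^{-1}+O_N(\lambda^{-N})$ for a bounded constant $c$ and a fixed Schwartz function $J$ depending only on $\rho$. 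Therefore $|r_{\lambda,\mu}(\tau)|\lesssim_N\lambda^{-1}(1+|\lambda-\tau|)^{-N}$ for $|\lambda-\tau|\geq1$, and $|r_{\lambda,\mu}(\tau)|\lesssim\lambda^{-1}(|\lambda-\tau|+|\mu|)^{-1}$ for $|\lambda-\tau|\leq1$. I then split the spectrum of $P$ into an outer region $|\lambda-\lambda_j|\geq1$, a core $|\lambda-\lambda_j|\leq C|\mu|$, and intermediate dyadic shells $|\lambda-\lambda_j|\approx2^k$ with $C|\mu|\leq2^k\leq1$.

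For the outer region, sandwich $r_{\lambda,\mu}(P)$ between the spectral projection onto a shell $|\lambda-\lambda_j|\approx2^k$ and bound that projection from $L^p$ to $L^2$ and from $L^2$ to $L^q$ by Sogge's unit-cluster estimates, summed over the $\approx2^k$ unit clusters it contains; this gives factors $\lesssim2^{k/2}\lambda^{\delta(p)}$ and $\lesssim2^{k/2}\lambda^{\delta(q')}$, so the $k$-th shell contributes $\lesssim2^{k/2}\lambda^{\delta(q')}\cdot\lambda^{-1}2^{-kN}\cdot2^{k/2}\lambda^{\delta(p)}=2^{k(1-N)}\lambda^{\delta(p)+\delta(q')-1}$. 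Since $\delta(r)=n(1/r-1/2)-1/2$ on the relevant range, $\delta(p)+\delta(q')=n(1/p-1/q)-1=1$, so this is $2^{k(1-N)}$ and sums to $O(1)$ (the region $\tau\gtrsim2\lambda$ is even better, by the extra decay of $r_{\lambda,\mu}$). For the core, the hypothesis $|\mu|\geq\max\{\varepsilon_p(\lambda),\varepsilon_{q'}(\lambda)\}$ makes it a cluster of width at least $\varepsilon_p(\lambda)$ and at least $\varepsilon_{q'}(\lambda)$; applying \eqref{1.13} for the exponent $p$ and, separately, for $q'$ — which also lies in $[\tfrac{2n(n+1)}{n^2+3n+4},\tfrac{2(n+1)}{n+3}]$, the Sobolev pairing followed by conjugation being a decreasing involution of that interval — and using the monotonicity and the doubling $\varepsilon_p(2\lambda)\geq\varepsilon_p(\lambda)/2$ to pass from the single scale $\varepsilon_p(\lambda)$ in \eqref{1.13} to all larger scales and to nearby frequencies, one obtains $\bigl\|\sum_{|\lambda-\lambda_j|\leq C|\mu|}E_j\bigr\|_{L^p\to L^2}\lesssim|\mu|^{1/2}\lambda^{\delta(p)}$ and $\bigl\|\sum_{|\lambda-\lambda_j|\leq C|\mu|}E_j\bigr\|_{L^2\to L^q}\lesssim|\mu|^{1/2}\lambda^{\delta(q')}$. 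Combined with $\bigl\|r_{\lambda,\mu}(P)\sum_{|\lambda-\lambda_j|\leq C|\mu|}E_j\bigr\|_{L^2\to L^2}\lesssim(\lambda|\mu|)^{-1}$, the core contributes $\lesssim|\mu|^{1/2}\lambda^{\delta(q')}\cdot(\lambda|\mu|)^{-1}\cdot|\mu|^{1/2}\lambda^{\delta(p)}=\lambda^{\delta(p)+\delta(q')-1}=1$.

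The delicate part is the intermediate region: running the core argument shell by shell costs $O(1)$ per shell and hence a $\log(1/\varepsilon_p(\lambda))$ loss after summing over the $\approx\log(1/|\mu|)$ scales $C|\mu|\leq2^k\leq1$. Removing this loss is precisely the purpose of Lemma \ref{lemma2.3}: instead of discarding the off-diagonal structure of $r_{\lambda,\mu}(P)$ by passing through its $L^2\to L^2$ operator norm, one keeps the factorization of its symbol on the unit cluster as $\lambda^{-1}J(\lambda-\tau)\bigl(i(\lambda-\tau)-|\mu|\bigr)^{-1}$ and composes the $L^p\to L^2$ cluster estimate with the $L^2\to L^q$ cluster estimate in the asymmetric manner described after \eqref{non_local_operator_definition}, so that the dyadic pieces sum with a genuine geometric gain, the exponents again balancing because $1/p-1/q=2/n$. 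Adding the three contributions gives $\|r_{\lambda,\mu}(P)\|_{L^p\to L^q}\lesssim1$, which together with the local bound yields \eqref{1.14}. The main obstacle is thus entirely this third step — the uniform control of the intermediate-frequency part of the non-local operator — and that is exactly what Lemma \ref{lemma2.3} delivers.
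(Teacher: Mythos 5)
Your overall architecture is the same as the paper's: the local operator is disposed of by Section 2, the region $|\lambda-\lambda_j|>1$ by Lemma \ref{lemma2.3.1}, and the unit window around $\lambda$ by upgrading the hypothesis \eqref{1.13} to clusters of any width $\alpha\in[\varepsilon_p(\lambda),1]$ (the paper's \eqref{small.to.large}, proved exactly as you indicate via Minkowski and the doubling/monotonicity of $\varepsilon_p$), splitting it by $TT^*$ into $L^p\to L^2$ and $L^2\to L^q$ pieces, and composing; your observation that $q'$ also lies in $[\tfrac{2n(n+1)}{n^2+3n+4},\tfrac{2(n+1)}{n+3}]$ so that \eqref{1.13} may be invoked at both exponents is correct and is implicitly used in \eqref{mu.p.q}. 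Your outer and core computations, with $\delta(p)+\delta(q')=n(1/p-1/q)-1=1$ on the Sobolev line, are also fine.

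The gap is exactly where you locate it, and your proposed fix does not work. Lemma \ref{lemma2.3.1} is a statement about \emph{unit-length} clusters; it contains no mechanism for summing the $\approx\log(1/|\mu|)$ dyadic shells with $|\mu|\lesssim|\lambda-\tau|\lesssim1$. The ``asymmetric composition'' $L^p\to L^2\to L^q$ is precisely what you already used to get the borderline $O(1)$ per shell, and since the product of the two cluster norms ($\approx 2^k|\mu|\,\lambda$) and the size of the symbol on the shell ($\approx\lambda^{-1}(2^k|\mu|)^{-1}$, by your own formula $r_{\lambda,\mu}(\tau)=c\lambda^{-1}J(\lambda-\tau)(i(\lambda-\tau)-|\mu|)^{-1}$ with $J$ essentially constant on $|\lambda-\tau|\leq1$) are both sharp, no rearrangement of absolute values can produce a geometric gain: as written you prove only $\|r_{\lambda,\mu}\|_{L^p\to L^q}\lesssim\log(1/|\mu|)$. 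What the paper does instead is partition $[\lambda-1,\lambda+1]$ into intervals $I_k$ of length $|\mu|$ and sum the composed estimate \eqref{mu.p.q} against the multiplier bound \eqref{nonlocal.multiplier}, whose second term $|\mu|^{-1}(1+|\mu|^{-1}|\lambda-\tau|)^{-N}$ decays \emph{rapidly measured in units of $|\mu|$}; the convergence of \eqref{equivalence.last} rests entirely on taking $N\geq2$ there. You should confront the tension between that bound and your own (correct) closed form for the symbol, which gives only the first-order decay $\lambda^{-1}(|\lambda-\tau|+|\mu|)^{-1}$ in the intermediate regime (e.g.\ at $|\lambda-\tau|=\sqrt{|\mu|}$ the symbol has size $\approx\lambda^{-1}|\mu|^{-1/2}$, whereas \eqref{nonlocal.multiplier} with $N=2$ would give $O(\lambda^{-1})$). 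So closing your gap by quoting \eqref{nonlocal.multiplier} is not an option either; one must instead exploit cancellation --- for instance the fact that the dangerous part of $(i(\lambda-\tau)-|\mu|)^{-1}$ is odd in $\lambda-\tau$ --- rather than summing absolute values over shells. This is the genuinely delicate point of the theorem and your proposal does not supply it.
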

With this theorem, we show in section 4 that the uniform resolvent estimates in
Theorem \ref{main_theorem} can be improved if the
manifold $M$ is equipped with a Riemannian metric with non-positive sectional curvature.
More precisely we can prove the following theorem:
\begin{theorem}
	If $M$ is a boundaryless Riemannian compact manifold with dimension $\geq3$ and of
	non-positive sectional curvature, then for $1/p-1/q=2/n, p<2(n+1)/(n+3),
	q>2(n+1)/(n-1)$ we have the following uniform resolvent estimates
	\begin{equation}
		||f||_{L^q(M)}\leq C ||(\Delta_g+(\lambda+i\mu)^{2})f||_{L^p(M)}
	\end{equation}
	if $\lambda\gg1$ and $|\mu|>(\log(\lambda))^{-1}$.
	\label{neg.result}
\end{theorem}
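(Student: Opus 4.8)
The plan is to invoke Theorem \ref{equivalence} with the curvature-dependent choice $\varepsilon_p(\lambda) = (\log\lambda)^{-1}$, so that the entire task reduces to verifying the shrinking spectral projection bound \eqref{1.13} with this $\varepsilon_p$ for the full range of exponents $p$ in $[2n(n+1)/(n^2+3n+4),\,2(n+1)/(n+3)]$. First I would check the elementary hypotheses on $\varepsilon_p$: that $(\log\lambda)^{-1}$ decreases monotonically to $0$ and satisfies $\varepsilon_p(2\lambda) = (\log 2 + \log\lambda)^{-1} \geq \frac12(\log\lambda)^{-1}$ for $\lambda$ large, both of which are immediate. It then remains to produce, for manifolds of non-positive sectional curvature, an estimate of the form
\begin{equation}
  \Bigl\|\sum_{|\lambda-\lambda_j|\leq (\log\lambda)^{-1}} E_j f\Bigr\|_{L^{p'}(M)} \leq C (\log\lambda)^{-1}\lambda^{2\delta(p)}\|f\|_{L^p(M)}, \qquad \lambda\gg1,
  \label{neg.spectral}
\end{equation}
where $\delta(p) = n(1/p - 1/p')/2 - 1/2$ is the relevant Sobolev exponent appearing in Sogge's spectral cluster estimates.

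The key input for \eqref{neg.spectral} is the logarithmic improvement of spectral cluster estimates on non-positively curved manifolds. For the unit-band projection $\chi_\lambda = \sum_{|\lambda-\lambda_j|\leq 1} E_j$, Sogge's theorem gives $\|\chi_\lambda f\|_{L^{p'}} \leq C\lambda^{\delta(p)}\|f\|_{L^2}$ (and dually), which by composition $L^p \to L^2 \to L^{p'}$ yields $\|\chi_\lambda f\|_{L^{p'}} \leq C\lambda^{2\delta(p)}\|f\|_{L^p}$; this is the $\varepsilon_p \equiv 1$ version. To get the gain of a factor $(\log\lambda)^{-1}$ when the band is shrunk to width $(\log\lambda)^{-1}$, I would appeal to the Bérard-type improvement: on a compact manifold without conjugate points (in particular, non-positively curved), the error term in the local Weyl law / the remainder in the pointwise Hörmander parametrix improves from $O(1)$ to $O(1/\log\lambda)$, because the first conjugate-point return time for the wave flow is $\gtrsim \log\lambda$. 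Concretely, using the half-wave representation of $\sum_{|\lambda-\lambda_j|\leq\varepsilon} E_j$ via $\int \hat\chi(t/\varepsilon)\,e^{-it\lambda} e^{itP}\,dt$ with $\varepsilon = (\log\lambda)^{-1}$, the finite-speed-of-propagation and Hadamard parametrix on the universal cover, combined with the exponential volume growth bound and summation over deck transformations, give the desired $L^2\to L^{p'}$ cluster bound with the extra $\varepsilon$ factor. This is exactly the mechanism used in \cite{bssy} to obtain the log-improvement on the line of duality, adapted here to the asymmetric $L^p\to L^{p'}$ setting.

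The main obstacle I anticipate is not the non-local/parametrix machinery per se — that is by now standard from Bérard and from \cite{bssy} — but rather confirming that the shrinking cluster estimate \eqref{neg.spectral} holds uniformly across the whole closed range $p \in [2n(n+1)/(n^2+3n+4),\, 2(n+1)/(n+3)]$ demanded by Theorem \ref{equivalence}, including the lower endpoint, and likewise for the dual exponent $q'$ so that the hypothesis is available for both $\varepsilon_p$ and $\varepsilon_{q'}$ in \eqref{1.14}. At the endpoint $p = 2(n+1)/(n+3)$ one has $p' = 2(n+1)/(n-1)$, the Stein–Tomas exponent, where the unit-band estimate is the sharp $L^2\to L^{p'}$ restriction-type bound; the log-improvement there requires the oscillatory-integral (Carleson–Sjölin) analysis of the parametrix rather than a soft argument, so one must check that Bérard's gain survives in that regime. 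Once \eqref{neg.spectral} is in hand for all such $p$, Theorem \ref{equivalence} delivers \eqref{1.14} precisely for $|\mu| > \max\{\varepsilon_p(\lambda),\varepsilon_{q'}(\lambda)\} = (\log\lambda)^{-1}$ and $\lambda \gg 1$, which is the claimed conclusion; the strict inequalities $p < 2(n+1)/(n+3)$, $q > 2(n+1)/(n-1)$ in the statement give a safe margin inside the admissible range and cause no difficulty.
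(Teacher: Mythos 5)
Your proposal follows essentially the same route as the paper: invoke Theorem \ref{equivalence} with $\varepsilon_p(\lambda)=(\log\lambda)^{-1}$, reducing everything to the log-shrunk spectral cluster bound, which the paper proves precisely by the B\'erard/universal-cover mechanism you describe (the $L^1\to L^\infty$ kernel bound from \cite{bssy} for $|t|\lesssim\varepsilon\log\lambda$, interpolated against the trivial $L^2\to L^2$ bound, with the short-time piece handled by the unit-band cluster estimates). One clarification on the "main obstacle" you raise: the endpoint $p=2(n+1)/(n+3)$ is never needed, since for any pair with $p<2(n+1)/(n+3)$ and $q>2(n+1)/(n-1)$ both $p$ and $q'$ lie strictly inside the admissible interval; indeed the paper explicitly cannot prove the endpoint case (it loses a factor of $\log\lambda$ there), and the strict inequality is not merely a "safe margin" but is what permits absorbing the $\log\lambda$ and $e^{c\varepsilon\log\lambda}=\lambda^{c\varepsilon}$ losses from the exponential volume growth by choosing the time-cutoff parameter $\varepsilon$ small depending on $p$.
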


The above theorem is an example in which one can get the same regional improvement for many
$(p,q)$ pairs on the Sobolev line, which is due to the slow growth of $\log\lambda$
compared to any power of $\lambda$. In general the improvements may  unsurprisingly
depend on the concrete value of $(p,q)$ as we have seen in Theorem \ref{equivalence}. 
In section 5 we prove the following theorem about the
improved resolvent estimates on Torus $\torus$ for $n\geq 3$ which serves as such an exmple:
\begin{theorem}
	Let $\torus$ denote the flat torus with $n\geq 3$. Then for a $(p,q)$ pair satisfying
	$1/p-1/q=2/n$ and $p\leq2(n+1)/(n+3), q\geq2(n+1)/(n-1)$ there exists a function in
	$p$, which we denote by
	$\epsilon_n(p)$, such that when $1/p$ is ranging from $(n+3)/2(n+1)$ to $(n+2)/2n$
	($\overline{AF}$ in figure \ref{figure}) it increases from 
	from $0$ to $1/(n+1)$, and symmetrically decreases from $1/(n+1)$ to $0$ when
	$(n+2)/2n\leq 1/p\leq (n^2+3n+4)/2n(n+1)$ ($\overline{FA'}$ in figure \ref{figure}),
	and we have the following improved resolvent
	estimates
	\begin{equation}
		||f||_{L^q(\torus)}\leq
		C||(\Delta_{\torus}+(\lambda+i\mu)^2)f||_{L^p(\torus)},
		\quad\lambda>1, |\mu|\geq\lambda^{-\varepsilon (p)}.
		\label{torus.estimate}
	\end{equation}
	The exact form for $\varepsilon_n(p)$ is given in \eqref{epsilon.p} when
	$(1/p,1/q)$ is below the line of duality.
	\label{torus.theorem}
\end{theorem}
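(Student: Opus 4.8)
The plan is to invoke Theorem \ref{equivalence}, which reduces the improved resolvent estimate \eqref{torus.estimate} to a shrinking-spectral-projection bound of the form \eqref{1.13} with $\varepsilon_p(\lambda)=\lambda^{-\varepsilon_n(p)}$. So the real work is to establish, on the flat torus $\torus$, that
\begin{equation}
	\Bigl\|\sum_{|\lambda-\lambda_j|\leq\lambda^{-\varepsilon_n(p)}}E_jf\Bigr\|_{L^{p'}(\torus)}\leq C\lambda^{-\varepsilon_n(p)}\lambda^{2\delta(p)}\|f\|_{L^p(\torus)},\qquad\lambda\gg1,
	\label{torus.shrink}
\end{equation}
for $2n(n+1)/(n^2+3n+4)\leq p\leq 2(n+1)/(n+3)$, and likewise with $p$ replaced by $q'$ (which lands in the same range by the Sobolev relation), so that the hypothesis $|\mu|\geq\max\{\varepsilon_p(\lambda),\varepsilon_{q'}(\lambda)\}$ in \eqref{1.14} becomes exactly $|\mu|\geq\lambda^{-\varepsilon_n(p)}$ after checking $\varepsilon_n(p)=\varepsilon_n(q')$ — which is forced by the stated symmetry of $\epsilon_n$ about the point $F$ where $1/p=(n+2)/2n$, since $p\mapsto q'$ reflects $1/p$ about that point on the Sobolev line. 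One also checks the two mild regularity conditions required by Theorem \ref{equivalence}: $\lambda^{-\varepsilon_n(p)}$ is decreasing to $0$ and satisfies $\varepsilon_p(2\lambda)\geq\varepsilon_p(\lambda)/2$, both trivial for a fixed negative power of $\lambda$.

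To prove \eqref{torus.shrink} the natural route is through lattice point counting. On $\torus$ the spectrum of $P=\sqrt{-\Delta_{\torus}}$ is $\{\,|k|:k\in\Z^n\}$ and $\sum_{|\lambda-\lambda_j|\leq r}E_jf$ is the Fourier projection onto frequencies $k$ with $|k|\in[\lambda-r,\lambda+r]$, i.e. onto the lattice points in a thin annular shell of radius $\lambda$ and thickness $\sim r$. The first step is to reduce \eqref{torus.shrink}, via interpolation, to its two endpoint cases: the point $A$ where $1/p=(n+3)/2(n+1)$, where $\varepsilon_n(p)=0$ and \eqref{torus.shrink} is just the universal bound of Sogge/Cauchy–Schwarz already available from Theorem \ref{main_theorem}'s ingredients; and the central point $F$ where $1/p=(n+2)/2n$, i.e. $p=2n/(n+2)$, $p'=2n/(n-2)$, where one needs the full strength $\varepsilon_n(p)=1/(n+1)$. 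At $F$ this is precisely the known sharp shrinking spectral estimate on the torus — the annulus shell estimate — which follows from the sharp $L^2\to L^{2n/(n-2)}$ bound for the Fourier restriction-type operator associated with counting lattice points in $\{|k|\in[\lambda-\lambda^{-1/(n+1)},\lambda+\lambda^{-1/(n+1)}]\}$; this is exactly the mechanism behind Shen's $L^p\to L^2$ torus resolvent estimates in \cite{Shen} and the related discrete restriction bounds. For the intermediate range one interpolates between $A$ and $F$ (and symmetrically $F$ and $A'$), and the stated piecewise-linear formula for $\epsilon_n(p)$ in \eqref{epsilon.p} is precisely what this interpolation produces; one must double-check that the gain $\lambda^{-\varepsilon_n(p)}$ obtained by interpolation dominates (is no larger than) the gain demanded on the right side of \eqref{torus.shrink}, which is a bookkeeping check on the exponents $\delta(p)$ along the Sobolev line.

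The main obstacle is the endpoint analysis at $F$: proving the sharp $\lambda^{-1/(n+1)}$-shell $L^p\to L^{p'}$ bound for $p=2n/(n+2)$ on $\torus$. This is genuinely the arithmetic heart of the matter — it is where the flatness of the torus (equivalently, the structure of $\Z^n$) enters, and where one cannot merely quote the universal Sogge bounds valid on any manifold. The argument should run through a $TT^*$-type reduction to estimating the kernel $\sum_{|k|\in[\lambda-\lambda^{-1/(n+1)},\lambda+\lambda^{-1/(n+1)}]}e^{ik\cdot(x-y)}$, decomposing the relevant annular cap into boundedly many pieces on which the sphere $\{|\xi|=\lambda\}$ is well approximated by a paraboloid, and then invoking the sharp $\ell^2$-decoupling / discrete restriction estimates for the paraboloid (or, in lower-tech form, the classical $L^{2(n+1)/(n-1)}$ bound combined with the $\varepsilon$-removal lemma) to get the required power savings; this also explains why the admissible range stops exactly at $p=2(n+1)/(n+3)$ (dually $q=2(n+1)/(n-1)$), the Stein–Tomas exponent. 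Away from the endpoint the interpolation step is essentially routine, and the verification that \eqref{torus.estimate} follows from \eqref{torus.shrink} via Theorem \ref{equivalence} is immediate once the two regularity conditions on $\varepsilon_p$ are noted. Finally, the symmetry of $\epsilon_n(p)$ about $F$ — which makes $\varepsilon_n(p)=\varepsilon_n(q')$ and hence collapses the hypothesis in \eqref{1.14} to the single condition $|\mu|\geq\lambda^{-\varepsilon_n(p)}$ in \eqref{torus.estimate} — is a consequence of the reflection symmetry $1/p\leftrightarrow 1-1/p$ of the Sobolev line about its midpoint $F$, and should be recorded explicitly before applying Theorem \ref{equivalence}.
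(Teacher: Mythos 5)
Your opening reduction --- invoking Theorem \ref{equivalence} with $\varepsilon_p(\lambda)=\lambda^{-\varepsilon_n(p)}$, checking the two regularity conditions, and noting that the symmetry $\varepsilon_n(p)=\varepsilon_n(q')$ collapses the hypothesis on $|\mu|$ --- matches the paper. The problem is the core step, the torus shell estimate itself. Your interpolation scheme does not type-check: the estimate at $A$ concerns a shell of unit width with no gain, while the estimate at $F$ concerns a shell of width $\lambda^{-1/(n+1)}$ with the full gain, so these are bounds for \emph{different operators} at different exponents and cannot be interpolated in the Riesz--Thorin sense. One must fix the shell width (equivalently, fix the time cutoff $\lambda^{\varepsilon(p)}$ in the multiplier $a_\lambda(P)$ of \eqref{torus.multipler.expression}) and prove two Lebesgue-endpoint bounds for that single operator. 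That is what the paper does: the $L^{2(n+1)/(n+3)}\to L^{2(n+1)/(n-1)}$ bound \eqref{torus.interpolation.1} is cheap (Lemma \ref{lemma2.3.1} summed over the $O(\lambda^{\varepsilon(p)})$-length time support), and the substantive input is the $L^1\to L^\infty$ kernel bound \eqref{torus.interpolation.2}, obtained from the lattice-sum representation \eqref{torus.half.wave}, finite propagation speed (restricting to translates $|j|\lesssim\lambda^{\varepsilon(p)}$), and stationary phase, giving $\lambda^{(n-1)/2}\sum_{1\le|j|\lesssim\lambda^{\varepsilon(p)}}|j|^{-(n-1)/2}\approx\lambda^{(n-1)/2+\varepsilon(p)(n+1)/2}$ as in \eqref{final}. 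Solving the resulting exponent equation \eqref{interpolation.a.lambda} for the largest admissible $\varepsilon(p)$ produces \eqref{epsilon.p}, which is a ratio of affine functions of the interpolation parameter $t$ --- not the piecewise-linear profile in $1/p$ that your scheme would output. So even the shape of your answer is inconsistent with the formula the theorem refers to.

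Second, your proposed endpoint at $F$ is not quotable. Shen's results in \cite{Shen} are $L^p\to L^2$ bounds with $|\mu|\geq1$, i.e.\ unit-width shells; they contain no power saving in the shell width. The classical Stein--Tomas/$\varepsilon$-removal technology you mention likewise does not produce the $\lambda^{-1/(n+1)}$ narrowing, and the decoupling route you sketch is a substantial independent theorem that would still have to be converted from a sphere-restriction statement into a shell bound of the precise form \eqref{1.13} before feeding into Theorem \ref{equivalence}. In the paper the arithmetic of $\Z^n$ enters only through the elementary count of lattice translates reached by the wave in time $\lambda^{\varepsilon(p)}$; no discrete restriction theorem is needed. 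I would rebuild your argument around the kernel estimate \eqref{torus.interpolation.2} and the interpolation with \eqref{torus.interpolation.1} for the fixed operator $a_\lambda(P)$.
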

Recall that in \cite{Shen} Shen proved the following uniform resolvent estimates:
\begin{equation}
	||f||_{L^{q}(\torus)}\leq
	C||(\Delta_{\torus}+(\lambda+i\mu)^2)f||_{L^2(\torus)},\quad \lambda,|\mu|\geq1
	\label{shen}
\end{equation}
for $2\leq q<2(n-2)/(n-4)$ when $n\geq 4$ and $2\leq q\leq\infty$ when $n=3$. If we use
H\"{o}lder inequality based on the fact that $\torus$ is compact and $p<2$ in
\eqref{torus.estimate}, we can obtain similar $L^2\to L^q$ type resolvent estimates but
with a much smaller $q$-range compared with \eqref{shen}. Our Theorem \ref{torus.theorem} on the
other hand improves
Shen's estimates in the aspect of allowing a smaller $|\mu|$ comparable to certain negative power of
$\lambda$ for part of his exponent range.
	
The following figure \ref{figure} can be used by the interested readers to understand the range of
the $(p,q)$ pair. $\overline{AA'}$ will be from
the non-local operator, which happens to be the global range mentioned in Theorem
\ref{main_theorem}. The Carleson-Sj\"{o}lin argument used for local operator will give us
the $\overline{DO}$ and Young's inequality can give us 
segment $\overline{EO}$, therefore we can interpolate to obtain the
segment $\overline{CC'}$ with both end points removed as the range for the local
part. Since in general $\overline{AA'}\subset\overline{CC'}$, the resolvent estimate
range for a compact manifold is therefore, as far as we can prove, is
constrained in $\overline{AA'}$. 
Notice that point $F=(\frac{n+2}{2n},\frac{n-2}{2n})$ is the $(1/p,1/q)$ pair considered in
\cite{Kenig} and \cite{bssy}.
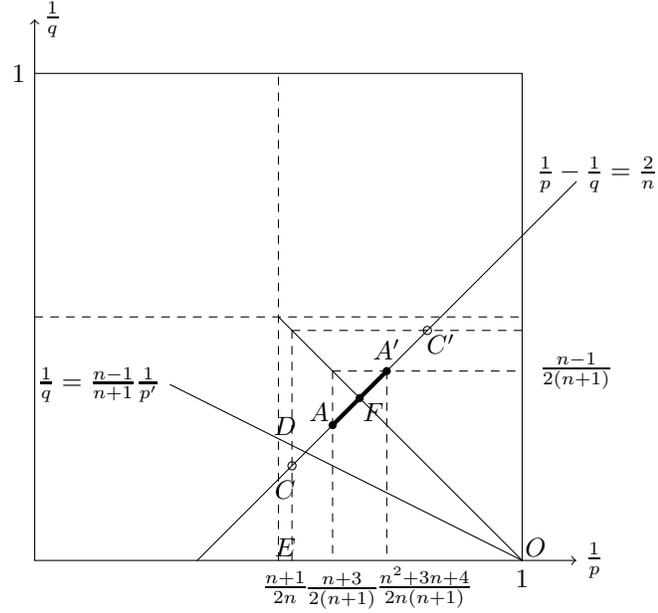
\begin{figure}[h]
	\hspace{1cm}\begin{tikzpicture}[scale=3.6,cap=round, dot/.style={circle,fill=black,minimum size=4pt,inner sep=0pt,
            outer sep=-1pt}]
  \tikzstyle{axes}=[]
  \tikzstyle{important line}=[very thick]
  \tikzstyle{information text}=[rounded corners,fill=red!10,inner sep=1ex
  ]

  \draw[style=help lines,step=0.1cm] ;
  \draw[->] (0,0) -- (2,0) node[right]{$\frac{1}{p}$};
  \draw[->] (0,0) -- (0,2) node[right]{$\frac{1}{q}$};	
  \draw[-] (1.8,1.8) -- (0,1.8) node[left]{$1$};
  \draw[-] (1.8,1.8) -- (1.8,0) node[below]{$1$};
  \draw[dashed] (0,0.9) -- (1.8,0.9);
  \draw[dashed] (0.9,0) -- (0.9,1.8);
  \draw[dashed] (1.1, 0.7) -- (1.1,0) ;
   \draw[dashed] (1.1,0.7) -- (1.8,0.7);
  \draw[ultra thick,-] (1.1,0.5) -- (1.3,0.7);
  \draw[-] (0.6,0) -- (2,1.4) node[xshift=-0.65cm,yshift=0.1cm,right]{$\frac{1}{p}-\frac{1}{q}=\frac{2}{n}$};
\fill(1.1,0.5) circle (0.015cm);
\fill(1.3,0.7) circle (0.015cm);
\draw(1.45,0.85) circle (0.015cm);
\draw[dashed](1.3,0.7)--(1.3,0);
\fill(1.2,0.6) circle (0.015cm);
\node at (1.25,0.55) {$F$};
\node at (1.05,0.55) {$A$};
\node at (1.3,0.78) {$A'$};
\draw[-] (1.8,0) -- (0.5,0.65)
node[left]{$\frac{1}{q}=\frac{n-1}{n+1}\frac{1}{p'}$};
\node at (0.925,0.495) {$D$};
\draw (0.95, 0.35) circle (0.015cm);
\node at (0.925,0.260) {$C$};
\draw[dashed](0.95,0.85)--(0.95,0);
\node at (0.925, 0.05) {$E$};
\node at (0.925, -0.1) {$\frac{n+1}{2n}$};
\node at (1.135, -0.11) {$\frac{n+3}{2(n+1)}$};
\node at (1.44, -0.1){$\frac{n^2+3n+4}{2n(n+1)}$};
\draw[dashed](0.95,0.85)--(1.8,0.85);
\node at (1.5,0.8) {$C'$};
\draw[-](1.8,0)--(0.9,0.9);
\node at (2,0.7) {$\frac{n-1}{2(n+1)}$};
\node at (1.85,0.05) {$O$};
\end{tikzpicture}
\caption{Admissible pairs}
\label{figure}
\end{figure}
\begin{remark}
	In \cite{bssy} we showed that the parabolic region is
	sharp for round spheres with dimension $\geq3$ for pair
	$(\frac{2n}{n+2},\frac{2n}{n-2})$. A simple duality argument and interpolation
	will show that this region is also sharp for our Theorem \ref{main_theorem}.
	We can somehow ask a question concerning the sharpness in a different manner: \emph{Is
	the range $\overline{AA'}$ sharp?}
	More precisely, is it possible to find a larger range than
	$\overline{AA'}$ on the Sobolev line such that for a general compact manifold $M$ we have
	uniform resolvent estimates as in Theorem \ref{main_theorem}.  See Remark
	\ref{riesz.means} for more discussion on this problem.

\end{remark}
Throughout
this paper $\delta (p)=n|\frac{1}{p}-\frac{1}{2}|-\frac{1}{2}$ for $1\leq p\leq
\infty$ and unless specified otherwise we generally assume $1\leq p\leq2\leq q\leq\infty$.

\noindent{\bf Acknowledge:} We would like to thank C. D. Sogge for many helpful and
enlightening discussions during the completion of this paper.
\section{Local Operator}
Our main theorem in this section is
\begin{theorem}
	The local operator $\local(P)$ is uniformly bounded for $\lambda>1$ and
	$\mu\neq0$ from $L^p(M)$ to
	$L^q(M)$ if $1/p-1/q=2/n$ and $p<2n/(n+1), q>2n/(n-1)$, or more straightforwardly
	when $(1/p,1/q)$ is on the
	segment $\overline{CC'}$ in figure \ref{figure} with both end points removed.
	\label{local_theorem}
\end{theorem}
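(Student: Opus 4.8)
The plan is to analyze the kernel $K_{\lambda,\mu}(x,y)$ of $\local(P)$ via the Hadamard parametrix for $\cos tP$ on the support of $\rho$, and then to prove the $L^p\to L^q$ bound by interpolating two extreme estimates. First I would recall that, since $\rho$ is supported in a small neighborhood of $t=0$, finite propagation speed lets us work in geodesic normal coordinates and write, modulo harmless smooth error terms,
\begin{equation}
	K_{\lambda,\mu}(x,y)=\frac{\rm{sgn}\mu}{i(\lambda+i\mu)}\sum_{\pm}\int_0^\infty\!\!\int_{\R^n}\rho(t)e^{i\lambda t}e^{-|\mu|t}e^{\pm i t|\xi|}a(x,y,t,\xi)e^{i(x-y)\cdot\xi}\,d\xi\,dt,
\end{equation}
with $a$ a symbol of order $0$ and compact support in $t$. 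Carrying out the $t$-integral (stationary phase, or directly integrating the resulting geometric-series-type expression in $|\xi|$), one sees that after summing the $\pm$ contributions the main term behaves like $(|\xi|^2-(\lambda+i\mu)^2)^{-1}$ times an amplitude, and the dangerous frequencies are those with $|\xi|\approx\lambda$. I would localize to a dyadic piece $|\xi|\sim\lambda$ and rescale $\xi=\lambda\eta$ so that the kernel becomes a constant-$\lambda$ power times an oscillatory integral $\int e^{i\lambda(\pm t|\eta|+(x-y)\cdot\eta)}b\,d\eta$ whose phase, restricted to $|\eta|\approx 1$, satisfies the Carleson--Sj\"olin (Hörmander/Stein) curvature hypothesis in $\eta$ because the sphere $\{|\eta|=1\}$ has nonvanishing Gaussian curvature. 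The low-frequency part $|\xi|\lesssim\lambda$ away from $\lambda$, and the high part $|\xi|\gtrsim\lambda$, give a kernel that is bounded (after accounting for the $\mu$ decay) and handled by Young's inequality.

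Concretely, the two endpoint estimates I would establish are: (i) the $L^{\frac{2n}{n+1}}\to L^{\frac{2n}{n-1}}$ bound at the point $D=\big(\frac{n+1}{2n},\frac{n-1}{2n}\big)$ (and its far end near $O$) coming from the Stein--Tomas/Carleson--Sj\"olin oscillatory integral estimate applied to the frequency shell $|\xi|\sim\lambda$, where the $\lambda^{-2}$ gain from $(|\xi|^2-\zeta)^{-1}$ exactly compensates the $L^p\to L^q$ operator norm $\lambda^{n(1/p-1/q)-2}=\lambda^0$ of the shell operator — this is where the Sobolev relation $1/p-1/q=2/n$ is used; and (ii) the $L^1\to L^\infty$ type bound coming from Young's inequality applied to the part of the kernel supported where $|\xi|$ is bounded away from $\lambda$ (the segment $\overline{EO}$), using that there the multiplier $(|\xi|^2-(\lambda+i\mu)^2)^{-1}$ decays and the full kernel is in $L^\infty_{x,y}$ uniformly. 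Interpolating between the Carleson--Sj\"olin line through $D$ and Young's line through $E$ gives boundedness on the open segment $\overline{CC'}$; the uniformity in $\mu\neq0$ follows because $e^{-|\mu|t}$ only helps (it is bounded by $1$ on $\operatorname{supp}\rho$) and $|\lambda+i\mu|^{-1}\le\lambda^{-1}$.

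I expect the main obstacle to be the careful bookkeeping in step (i): verifying that after the $t$-integration and the dyadic decomposition in $|\xi|$ the amplitude really is a nice symbol to which the Carleson--Sj\"olin estimate (in the form used in \cite{Kenig}, \cite{Sogge1}) applies uniformly in $\lambda$, and tracking the precise powers of $\lambda$ so that the Sobolev-line cancellation is exact rather than merely up to $\lambda^\e$. A secondary technical point is justifying the reduction to normal coordinates and the disposal of the smooth remainder from the Hadamard parametrix, together with checking that summing over the (finitely many, because of compact $t$-support) dyadic shells costs at most a constant. Endpoint pairs are excluded precisely because both the Carleson--Sj\"olin estimate and Young's inequality degenerate there, which is why the statement asserts boundedness only on $\overline{CC'}$ with the endpoints removed.
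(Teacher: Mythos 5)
Your overall architecture --- Hadamard parametrix for $\cos tP$, a Carleson--Sj\"olin estimate giving the line through $D$, Young's inequality for $\overline{EO}$, and interpolation to reach $\overline{CC'}$ --- is the same as the paper's. But step (i) has a genuine gap at exactly the point where the work lies. On the shell $|\xi|\sim\lambda$ the factor $(|\xi|^2-(\lambda+i\mu)^2)^{-1}=\bigl((|\xi|-\lambda-i\mu)(|\xi|+\lambda+i\mu)\bigr)^{-1}$ is \emph{not} uniformly of size $\lambda^{-2}$: near $|\xi|=\lambda$ it is of size $\bigl(\lambda(||\xi|-\lambda|+|\mu|)\bigr)^{-1}$, hence as large as $(\lambda|\mu|)^{-1}$, and the theorem must hold for every $\mu\neq0$. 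So you cannot treat your rescaled amplitude $b$ as a symbol with bounds uniform in $\mu$ and quote the Carleson--Sj\"olin theorem; the singular factor has to be removed first by an explicit kernel computation. This is what Proposition \ref{prop2.4} does: using the exact Fourier transform $\int e^{-irt}(r-1+i\e)^{-1}dr=2\pi iH(\e t)e^{-it}e^{-|\e t|}$ together with the asymptotics of the Fourier transform of surface measure on $S^{n-1}$, it shows that $\int a(|\xi|)e^{ix\cdot\xi}(|\xi|-\omega)^{-1}d\xi$ equals a bounded term $|x|^{1-n}a_{1,\omega}(x)$ plus $e^{\pm i(\re\omega)|x|}|\re\omega|^{\frac{n-1}{2}}|x|^{-\frac{n-1}{2}}$ times an amplitude whose derivative bounds are \emph{uniform in} $\im\omega\neq0$; only after this reduction does the Carleson--Sj\"olin theorem apply. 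For the same reason your remark that uniformity in $\mu$ is automatic because ``$e^{-|\mu|t}$ only helps'' is backwards: the hard case is small $|\mu|$, where that factor gives nothing and the multiplier is nearly singular at $\tau=\lambda$.

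Two further points. The paper's dyadic decomposition is in time, $t\sim2^j/\lambda$ (equivalently in $d_g(x,y)$), not in frequency; after rescaling by $\e=2^j/\lambda$ this produces the weight $d_g(x,y)^{-\frac{n-1}{2}}$ at scale $\e$ and, via Carleson--Sj\"olin, the factor $2^{j(\frac{n+1}{2}-\frac{n}{p})}$ whose geometric summability over the $O(\log\lambda)$ relevant values of $j$ is precisely the source of the constraint $p<2n/(n+1)$ in the statement --- your outline never accounts for where that constraint comes from. Also, the high-frequency piece $|\xi|\gtrsim\lambda$ is a symbol of order $-2$ whose kernel behaves like $d_g(x,y)^{2-n}$, which for $n\geq3$ is \emph{not} in $L^\infty_{x,y}$; it is handled by Hardy--Littlewood--Sobolev on the full Sobolev line (as the paper does for $S_0$ via Lemma \ref{-2symbol}), not by Young's inequality.
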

For simplicity we only prove the case when $\mu>1$, the other case is symmetric.
We first split the local resolvent into 
\begin{equation*}
	\local(x,y)=\sum_{j=0}^{\infty}S_j(x,y)
\end{equation*}
in which
\begin{equation}
	S_jf=\frac{1}{i(\lambda+i\mu)}\int_{0}^{\infty}\beta(\lambda2^{-j}t)\rho(t)e^{i\lambda
	t-\mu t}\cos{tP}dt,\quad j\geq1
	\label{S_j}
\end{equation}
and 
\begin{equation}
	S_0f=\frac{1}{i(\lambda+i\mu)}\int_0^{\infty}(1-\sum_{j=0}^{\infty}\beta(\lambda2^{-j}t))\rho(t)e^{i\lambda
	t-\mu t}\cos{tP}dt.
	\label{S_0}
\end{equation}
Here the function $\beta\in C_0^{\infty}(\R^1)$ satisfies the following properties
\begin{equation}
	\beta(t)=0, t\notin[1/2,2], |\beta(t)|\leq1, {\rm{\, and\, }}
	\sum_{-\infty}^{\infty}\beta(2^{-j}t)=1
	\label{littlewood.paley.function.property}
\end{equation}
Roughly speaking $S_0$ is the worse part of the local operator as its time support is
close to the singular point $t=0$. So instead of the oscillatory integral technique we are
going to use for $S_j,j\geq1$, we take advantage of the $O(\lambda^{-1})$ smallness of the
time support to directly estimate its kernel. Here we use a slightly simpler way to
achieve this compared with the one used in
\cite{bssy}. In fact, we are going to prove that
\begin{lemma}
	The multiplier $S_0(\tau)$ defined as
	\begin{equation}
		S_0(\tau)=\frac{1}{\coe}\int_0^{\infty}\srho\rho(t) e^{i\lambda t}e^{-\mu t}\cos
		t\tau dt
		\label{multiplier}
	\end{equation}
	is a $-2$ order symbol
	function with symbol norm independent from $\lambda$ or $\mu$.
	\label{-2symbol}
\end{lemma}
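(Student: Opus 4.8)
The plan is to verify directly the defining bounds of a symbol of order $-2$, namely $|\partial_\tau^k S_0(\tau)|\le C_k(1+|\tau|)^{-2-k}$ for every $k\ge 0$ with $C_k$ independent of $\lambda>1$ and $\mu\ne 0$ (by symmetry we may assume $\mu>0$). Write $a(t)=\tilde\rho(\lambda t)\rho(t)$ for the amplitude in \eqref{multiplier}; from its construction (compare \eqref{S_0}) $a$ is smooth, supported in $\{0\le t\lesssim\lambda^{-1}\}$, with $|\partial_t^k a(t)|\le C_k\lambda^k$ uniformly in $\lambda\ge 1$, and $a(0)=\rho(0)$. Expanding $\cos t\tau=\tfrac12(e^{it\tau}+e^{-it\tau})$ and setting $z_\pm=z_\pm(\tau)=i(\lambda+i\mu)\pm i\tau$ (so $\operatorname{Re}z_\pm=-\mu<0$, $z_++z_-=2i(\lambda+i\mu)$, $z_+-z_-=2i\tau$, and $z_+z_-=\tau^2-(\lambda+i\mu)^2$), one gets
\[
	\partial_\tau^k S_0(\tau)=\frac{i^k}{2i(\lambda+i\mu)}\bigl(J_+^{[k]}(\tau)+(-1)^kJ_-^{[k]}(\tau)\bigr),
	\qquad J_\pm^{[k]}(\tau)=\int_0^\infty t^k a(t)\,e^{z_\pm t}\,dt .
\]

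For $|\tau|\le 2\lambda$ there is nothing subtle: $a$ and its derivatives are bounded on an interval of length $O(\lambda^{-1})$ and $\operatorname{Re}z_\pm<0$, so $|J_\pm^{[k]}(\tau)|\le C_k\lambda^{-k-1}$, hence $|\partial_\tau^k S_0(\tau)|\le C_k\lambda^{-k-2}\le C_k'(1+|\tau|)^{-2-k}$ since $1+|\tau|\le 3\lambda$ here. For $|\tau|>2\lambda$ we have $|z_\pm|\ge|\lambda\pm\tau|\ge\tfrac12|\tau|=:T/2$, and I would integrate by parts $k+2$ times in $t$; because $t^k a(t)$ vanishes to order $k$ at $t=0$ the first $k$ boundary terms vanish, giving
\[
	J_\pm^{[k]}(\tau)=\sum_{j=k}^{k+1}\frac{(-1)^{j+1}\,\partial_t^j\!\bigl(t^k a(t)\bigr)\big|_{t=0}}{z_\pm^{j+1}}
	+O\!\bigl(\lambda\,T^{-k-2}\bigr),
\]
with numerators of size $O(\lambda^{j-k})$ (the leading one, $j=k$, equal to $k!\,a(0)$). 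After multiplying by $(\lambda+i\mu)^{-1}$ the remainder is $O(T^{-k-2})$ because $\lambda\le|\lambda+i\mu|$, so $\partial_\tau^k S_0(\tau)$ is a bounded linear combination of the terms $(\lambda+i\mu)^{-1}\,\partial_t^j(t^k a)|_0\,\bigl(z_+^{-(j+1)}+(-1)^k z_-^{-(j+1)}\bigr)$, $j\in\{k,k+1\}$.

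The crux is the algebraic cancellation the factor $\cos t\tau$ forces between the $z_+$ and $z_-$ contributions. Set $L=|\lambda+i\mu|=\sqrt{\lambda^2+\mu^2}$. Two elementary facts suffice: $|z_\pm|\le L+T$, and $z_+z_-=\tau^2-(\lambda+i\mu)^2$ has real part $\tau^2+\mu^2-\lambda^2\ge\tfrac34\tau^2+\mu^2\gtrsim(L+T)^2$ on this range, so $|z_+z_-|\gtrsim(L+T)^2$. Since $|z_++z_-|=2L$ and $|z_+-z_-|=2T$, the standard factorizations — $z_+^m+z_-^m$ is divisible by $z_++z_-$ for $m$ odd, and $z_+^m-z_-^m$ by both $z_++z_-$ and $z_+-z_-$ for $m$ even — give, for the "matching" pairing, $|z_+^{-m}\pm z_-^{-m}|\le C_m\,L\,T^{-(m+1)}$; for the opposite pairing the crude bound $|z_+^{-m}|+|z_-^{-m}|\le C_m T^{-m}$ is all one needs. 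The leading term $j=k$ always falls in the matching case, and in each of the finitely many terms one checks — using $\lambda\le L$ and $\lambda<T$ to absorb the factors $\lambda^{j-k}$ — that it is $\le C_{j,k}(1+|\tau|)^{-2-k}$; together with the remainder this yields the claim. (As a check, the $j=k=0$ term alone equals $-\rho(0)\bigl(\tau^2-(\lambda+i\mu)^2\bigr)^{-1}$, i.e.\ the resolvent multiplier itself up to the constant $\rho(0)$.)

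The step I expect to require the most care is exactly this cancellation: bounding $z_+^{-(j+1)}$ and $z_-^{-(j+1)}$ separately only gives $|\partial_\tau^k S_0(\tau)|\lesssim L^{-1}T^{-k-1}$, which is not $O(T^{-k-2})$ when $\mu$ is small compared with $\lambda$ and $|\tau|$ is large, so one genuinely needs the extra factor $L/T$ produced by divisibility by $z_++z_-$ (and, for even $m$, also by $z_+-z_-$). A closely related point is the uniformity in $\mu$: the factor $L$ arising in the numerators of the cancellation estimates is precisely absorbed by the prefactor $(\lambda+i\mu)^{-1}$, so that no power of $\mu$ survives in the final symbol constants.
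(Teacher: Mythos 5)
Your argument is correct, and it reaches the symbol bounds by a genuinely different bookkeeping than the paper's. The paper keeps the factor $\cos t\tau$ intact and integrates by parts $j+2$ times using \eqref{cos}, so that every $t$-derivative falls on the amplitude $e^{-\mu t}\,t^j\,\srho\rho(t)e^{i\lambda t}$; it then sorts the resulting Leibniz terms by how many derivatives hit $e^{-\mu t}$ versus the rest, disposing of the $\mu$-heavy terms by the scaling $t\to\mu^{-1}t$ and of the $\lambda$-heavy ones by the support condition $t\lesssim\lambda^{-1}$, while the boundary contributions are harmless because $\sin t\tau$ vanishes at $t=0$ and the factor $t^j$ postpones them to the last step. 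You instead split $\cos t\tau$ into exponentials and absorb $e^{i\lambda t}e^{-\mu t}$ into $e^{z_\pm t}$, so that only $t^k a(t)$ is ever differentiated; this makes the computation cleaner (no powers of $\mu$ or $\lambda$ generated by the exponentials, a clean $O(\lambda T^{-k-2})$ remainder, no scaling arguments) but destroys the automatic cancellation coming from $\sin 0=0$, which you must then restore by hand via the divisibility of $z_+^m\pm z_-^m$ by $z_++z_-=2i(\lambda+i\mu)$. You correctly identify that this cancellation is indispensable --- the separate bounds on $z_\pm^{-m}$ lose a factor $T/\lambda$ --- and your parity check that the $j=k$ boundary term always lands in the ``matching'' pairing, while the $j=k+1$ term carries a compensating factor $\lambda$ in its numerator, is exactly right. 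Splitting the $\tau$-range at $|\tau|\sim\lambda$ rather than at $|\tau|\sim 1$ as in the paper is also fine, since the crude bound $\lambda^{-k-2}$ dominates $(1+|\tau|)^{-k-2}$ in that regime. Both proofs are complete; yours trades the paper's scaling arguments for an explicit algebraic identity, with the pleasant side effect of exhibiting the leading term as $-\rho(0)\bigl(\tau^2-(\lambda+i\mu)^2\bigr)^{-1}$, i.e.\ the resolvent multiplier itself.
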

\begin{proof} Due to the small $t$ support in the integrand we know that when
	$|\tau|<1$ the integral, and similarly its $\tau$ derivatives in \eqref{multiplier}
	are uniformly bounded. Therefore we need only to prove that
	\begin{equation}
		|\frac{d^j}{d\tau^j}S_0(\tau)|\leq C_j
		\tau^{-2-j},|\tau|\geq1.\label{symbol.estimate}
	\end{equation}
	Let us prove first the case $j=0$ which may help the readers to understand how
	to handle the general case. Due to the fact that 
	\begin{equation}
		\cos t\tau=\frac{1}{\tau}\frac{d}{dt}\sin t\tau,\quad \sin
		t\tau=-\frac{1}{\tau}\frac{d}{dt}\cos t\tau\label{cos}
	\end{equation}
	we can do integration by parts twice and end up with some integrals
	boundary terms. Combining the fact that the integrand has a small
	$t$ support $t\leq4\lambda^{-1}$, $e^{-\mu t}\mu $ is integrable uniformly in 
	$\mu$ and $|\lambda+i\mu|\geq\lambda$ or $\mu$ we immediately see that both the
	boundary terms and the integrals are 
	uniformly bounded. This proves \eqref{symbol.estimate} when $j=0$.

        Now after taking $j$ times $\tau$ derivatives we  have
	\begin{equation*}
		\frac{d^j}{d\tau^j}S_0=\frac{\pm1}{\coe}\int\srho\rho(t) e^{i\lambda
		t}e^{-\mu t}t^j\cos t\tau dt\quad 
	\end{equation*}
	when $j$ is even, and with $\cos t\tau$ being replaced by $\sin t\tau$ when
	$j$ is odd. Then similar to the $j=0$ case take integration by
	parts in $t$ for $j+2$ times. Thanks to the presence of $t^j$ no matter $j$ is
	even or odd the boundary terms would be non-vanishing only at the final step,
	which can be estimated similarly as in the case $j=0$. So for simplicity we assume
	$j$ is even and ignore the boundary terms.

	Now by Leibniz's formula we have, for $\alpha,\beta,\gamma\geq0$,
	\begin{equation*}
		\begin{split}
			\frac{d^j}{d\tau^j}S_0&=\sum_{\alpha+\beta+\gamma=j+2}\frac{C_{\alpha\beta\gamma}}{\tau^{j+2}\coe}\int_0^{\infty}\frac{d^{\alpha}}{dt^{\alpha}}(
			e^{-\mu t} )\frac{d^{\beta}}{dt^{\beta}}(
			t^j )\frac{d^{\gamma}}{dt^{\gamma}}(\srho\rho(t)e^{i\lambda
			t})\cos t\tau dt\\
			&=\frac{1}{\tau^{j+2}\coe}(\sum_{\alpha+\beta=j+2}+\sum_{\alpha+\beta=j+1}+\sum_{\alpha+\beta\leq
			j})\\
			&=\frac{1}{\tau^{j+2}\coe}(I+II+III).
		\end{split}
	\end{equation*}
	Notice that in $I$ we have $\gamma=0$, in $II$ we have $\gamma=1$ and in
	$III$ we have $\gamma\geq2$. A simple check will show that terms in $I$ are those
	containing $\mu^{k+2}t^k,k\geq0$, therefore can be estimated using variable scaling
	$t\to\mu^{-1}t$ and the fact that $|\lambda+i\mu|>\mu$. Similarly the terms in
	$II$ are those containing $\mu^{k+1}t^k,k\geq0$ and can be handled using the same
	scaling and the fact $|\lambda+i\mu|>\lambda$. Finally, it is easy to check
	\begin{equation*}
		III=\sum_{\alpha+\beta\leq
		j}\frac{C_{\alpha\beta}}{\tau^{j+2}\coe}\int_0^{\infty}e^{-\mu
		t}(\mu
		t)^{\alpha}t^{j-\beta-\alpha}\left(\frac{d}{dt}\right)^{j-\beta-\alpha+2}(\srho\rho(t)e^{i\lambda
		t})\cos t \tau dt.
	\end{equation*}
	Using the facts that the factors $e^{-\mu t}(\mu t)^{\alpha}$ are uniformly
	bounded, on the support of the integrands we have $t<\lambda^{-1}$ and
	$|\lambda+i\mu|>\lambda$ the proof is therefore complete.
\end{proof}

With the aid of this lemma, we know that $S_0(P)$, defined in the sense of spectral theory
by $S_0(P)f=\sum_{j=0}^{\infty}S_0(\lambda_j)E_j f, f\in C^{\infty}(M)$ in which $E_j$ is
the $j$-th eigenspace projection associated with eigenvalue $\lambda_j$ of $P$, is therefore a $-2$ order
pseudodifferential operator (see for example \cite{Soggebook}, Theorem 4.3.1),
and in particular with symbol norms uniformly bounded from $\lambda$ or
$\mu$. This then leads us to the following kernel estimate (see for example Proposition 1
on the page 241 of
\cite{Stein}) if we recall that $n\geq3$:
\begin{equation*}
	|S_0(x,y)|\leq C\dis^{2-n}.
\end{equation*}
By the Hardy-Littlewood-Sobolev inequality, $S_0$ is a $L^{p}\to L^{q}$ bounded operator on
Sobolev line $\left\{ (1/p,1/q):1/p-1/q=2/n \right\}$.

To deal with $S_j,j\geq1$, we need the following version of Proposition 2.4 in
\cite{bssy}
\begin{prop}
	Let $n\geq2$ and assume that $a\in C^{\infty}(\R_+)$ satisfies the Mihlin-type
	condition that for each $j=0,1,\dots,2$ 
	\begin{equation}
		|\frac{d^j}{ds^j}a(s)|\leq A_js^{-j}, s>0
		\label{mihlin}
	\end{equation}
	Then there are constants $B,B_j$, which depend only on the size of finitely many of
	the constants $A_j$ so that for every $\omega\in\mathbb{C}$ such that for
	$\im\omega\neq0$ and $1/4<|x|<4$ we 
	have
	\begin{equation}\begin{split}
		\int_{\R^n}\frac{a(|\xi|)e^{ix\cdot\xi}}{|\xi|-\omega}d\xi
		=|x|^{1-n}a_{1,\omega}(x)+\sum_{\pm}e^{\pm i(\re\omega)|x|}
		|\re\omega|^{\frac{n-1}{2}}|x|^{-\frac{n-1}{2}}a_{2,\omega}^{\pm}(|x|),
		\end{split}\label{onedajifen}
	\end{equation}
	where $|a_{1,\omega}(x)|=B$ is a bounded smooth function,
	$|\frac{d^j}{ds^j}a_{2,\omega}^{\pm}(s)|\leq
	B_js^{-j}$. Therefore $a_{2,\omega}^{\pm}(|x|)$ have bounded derivatives under our
	assumption on $|x|$.
	\label{prop2.4}
\end{prop}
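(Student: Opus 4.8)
The plan is to reduce the $n$-dimensional integral to a one-dimensional one by the radial Fourier transform, and then to isolate from the one-dimensional integral the contribution of the pole $r=\omega$, in the spirit of \cite{bssy} and \cite{KRS}.

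Passing to polar coordinates and using $\int_{S^{n-1}}e^{ir\,\theta\cdot x}\,d\theta=c_n(r|x|)^{1-n/2}J_{n/2-1}(r|x|)$ gives (the integral being read in the usual regularized sense when $n\ge2$)
\begin{equation*}
	\int_{\R^n}\frac{a(|\xi|)e^{ix\cdot\xi}}{|\xi|-\omega}\,d\xi
	= c_n\,|x|^{1-\frac n2}\int_0^\infty\frac{a(r)}{r-\omega}\,J_{n/2-1}(r|x|)\,r^{n/2}\,dr .
\end{equation*}
I would then split the Bessel factor with a cutoff $\chi\equiv1$ near $0$. On $\{\chi(r|x|)\ne0\}\subset\{r\lesssim1\}$ one has $|J_{n/2-1}(r|x|)|\lesssim(r|x|)^{n/2-1}$, and, since $1/4<|x|<4$, the resulting integral is a bounded $C^2$ function of $x$ with bounds depending only on the $A_j$: it is small by the factor $|r-\omega|^{-1}\lesssim(\re\omega)^{-1}$ when $\re\omega$ is large, and uniform in $\im\omega\to0$ when $\re\omega$ is bounded thanks to the elementary principal-value bound $\bigl|\int_0^c\frac{g(r)}{r-\omega}\,dr\bigr|\lesssim\|g\|_{C^1}$. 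This piece goes into $|x|^{1-n}a_{1,\omega}(x)$, which is harmless because $|x|^{1-n}\simeq1$ on the given range of $|x|$. On $\{r|x|\gtrsim1\}$ I would insert the Hankel expansion $J_{n/2-1}(s)=(2\pi s)^{-1/2}\sum_\pm e^{\pm i(s-\vartheta)}\Phi_\pm(s)$ with $\Phi_\pm$ symbols of order $0$; collecting constants, the matter reduces to the two one-dimensional integrals
\begin{equation*}
	I_\pm(s)=\int_0^\infty\frac{b_\pm(r;s)}{r-\omega}\,e^{\pm irs}\,dr,\qquad
	b_\pm(r;s)=a(r)(1-\chi)(rs)\Phi_\pm(rs)\,r^{(n-1)/2},\quad s=|x|,
\end{equation*}
where $b_\pm(\cdot\,;s)$ is supported in $\{r\gtrsim1\}$, is a symbol of order $(n-1)/2$ in $r$ depending smoothly on $s\in(1/4,4)$, and the prefactor that survives is exactly $|x|^{-(n-1)/2}$.

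Next I would analyze $I_\pm$, treating $\im\omega>0$ (the case $\im\omega<0$ is the complex conjugate) and distinguishing whether the pole $r=\omega=\tau+i\sigma$ lies close to the support of $b_\pm$. If $\tau=\re\omega$ is bounded, or $|\sigma|\gtrsim1$, then on that support $1/(r-\omega)$ is an honest symbol of order $-1$ (with an extra $|\sigma|^{-1}$ when $|\sigma|$ is large), so integration by parts in $r$ against $e^{\pm irs}$ — using $s\simeq1$ and the symbol decay, as often as the hypothesis on $a$ allows — shows that $I_\pm$ and its $s$-derivatives are controlled by the $A_j$; these contributions enter $a_{1,\omega}$, and one may take $a_{2,\omega}^\pm\equiv0$ (admissible, since the asserted identity only requires existence of such functions). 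The remaining case is $\tau\gg1$ with $|\sigma|\lesssim1$. Here I would write $\tfrac1{r-\omega}=i\int_0^\infty e^{it\omega}e^{-itr}\,dt$ (legitimate since $\im\omega>0$) and interchange the integrals to get $I_+(s)=i\int_0^\infty e^{it\omega}\widehat{b_+}(t-s)\,dt$; the analogous $I_-(s)=i\int_0^\infty e^{it\omega}\widehat{b_-}(t+s)\,dt$ has its mass at $t=-s<0$ and is negligible. Since the symbol bounds on $b_\pm$ give $|\widehat{b_\pm}(\zeta)|\lesssim\tau^{(n+1)/2}(1+\tau|\zeta|)^{-2}$, the mass of $I_+(s)$ is concentrated near $t=s>0$, and evaluating the slowly varying factors there shows that $I_\pm(s)$ has the form $e^{\pm i\tau s}G_\pm(s)$ with leading term $c_0\,b_\pm(\tau;s)e^{\mp\sigma s}$; in particular $|G_\pm(s)|\lesssim|b_\pm(\tau;s)|\lesssim\tau^{(n-1)/2}$, and since $e^{\mp\sigma s}\le1$ in this subcase, multiplying back the prefactor $|x|^{-(n-1)/2}$ produces exactly a term of the form $e^{\pm i(\re\omega)|x|}|\re\omega|^{(n-1)/2}|x|^{-(n-1)/2}a_{2,\omega}^\pm(|x|)$ with $a_{2,\omega}^\pm$ bounded.

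The point requiring the most care is the last clause: that $a_{2,\omega}^\pm$ is not merely bounded but satisfies $|\tfrac{d^j}{ds^j}a_{2,\omega}^\pm(s)|\le B_js^{-j}$ uniformly in $\omega$. A crude bound for $\partial_s I_\pm$ loses a power of $\tau$ — from $\partial_s e^{\pm i\tau s}$, or from the weight $r\simeq\tau$ created by $\partial_s e^{\pm irs}$ — so the phase must be pulled out explicitly, and $G_\pm(s)=e^{\mp i\tau s}I_\pm(s)=\int_0^\infty\frac{b_\pm(r;s)}{r-\omega}e^{\pm i(r-\tau)s}\,dr$ must be shown to be a genuine order-zero symbol in $s$ (up to size $\tau^{(n-1)/2}$). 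For this I would repeatedly use $\tfrac{r-\tau}{r-\omega}=1+\tfrac{i\sigma}{r-\omega}$ to absorb each power of $r-\tau$ brought down by $\partial_s$ into either a harmless $\widehat{b_\pm}$-type factor or the bounded constant $\sigma$ (bounded because $|\sigma|\lesssim1$), together with the estimates $|\partial_s\partial_r^jb_\pm|\lesssim r^{(n-1)/2-j}$ that follow from the Mihlin hypothesis $|a^{(j)}(r)|\lesssim r^{-j}$ and from $\Phi_\pm$ being order zero. Finally I would collect the cutoff piece, the non-resonant $I_\pm$, and the negligible $I_\mp$ into $a_{1,\omega}(x)$, and the resonant contribution $e^{\pm i\tau s}G_\pm(s)$ into $a_{2,\omega}^\pm(|x|)$, and check that every constant produced is a finite combination of $A_0,A_1,A_2$ — which is exactly the assertion about $B$ and $B_j$ in the statement.
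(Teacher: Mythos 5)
Your proposal is correct and follows essentially the same route as the paper: polar coordinates plus the asymptotic expansion of the Fourier transform of the sphere (equivalently your Hankel expansion of $J_{n/2-1}$), then a one-sided Fourier representation of $(r-\omega)^{-1}$ (the paper's Lemma \ref{lemma2.1}) to turn the resonant piece into a convolution from which the phase $e^{\pm i(\re\omega)|x|}$ is extracted before verifying the symbol bounds on the remaining amplitude. The one point to tighten is that your bound $|\widehat{b_\pm}(\zeta)|\lesssim\tau^{(n+1)/2}(1+\tau|\zeta|)^{-2}$ is false for the unlocalized $b_\pm$ when $n\geq3$ (its derivatives of order $(n-1)/2-j$ are not integrable); one must first restrict to $r\simeq\tau$ — the paper does this by rescaling $\xi\to\re\omega\,\xi$ and cutting off to $|\xi|\in(1/4,4)$ — with the complementary region being non-resonant and disposed of by the integration by parts you already describe.
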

\begin{proof}If $\re\omega=0$, then we take $\xi\to\varepsilon\xi$ scaling to prove
	\eqref{onedajifen} since now the oscillatory integral is the Fourier transform of
	a $-1$ order Mihlin-type symbol function. We then assume from now on that
	$\re\omega\neq0$, which allows us to
        take the scaling $\xi\to\re\omega\xi$. Notice that  if
	$|\im\omega/\re\omega|\geq1$ or $|\xi|\notin(1/4,4)$ then we again have a
	$-1$ order Mihlin-type symbol function with symbol norms uniformly bounded, so we
	need only to consider
	 the following integral
	\begin{equation}
		|\re\omega|^{n-1}\int\frac{\beta(|\xi|)a(\re\omega|\xi|)e^{i\re\omega x\cdot\xi}}{|\xi|-1+i\varepsilon },\quad
		0<|\varepsilon |<1
		\label{reduced}
	\end{equation}
	in which $\beta(r)$ is a smooth function supported in $(1/4,4)$ and equal to
	$1$ in $(1/2,2)$. For simplicity we write $\beta(|\xi|)a(\re\omega|\xi|)$ as
	$\alpha(|\xi|)$. Now when $|\re\omega|\cdot|x|<1$, we can use the property that
	the
	function $\alpha(|\xi|)e^{i\re\omega x\cdot\xi}$ has bounded $\xi$ derivatives to
	show that the integral in \eqref{reduced} will be uniformly bounded under such assumption. 
	Therefore we need only to consider the case when $|\re\omega||x|\geq1$.

	Recall the following standard formula about the Fourier transform of
	sphere $S^{n-1}$:
	\begin{equation*}
		\int_{S^{n-1}}e^{x\cdot\omega}d\sigma(\omega)=\sum_{\pm}|x|^{-\frac{n-1}{2}}c_{\pm}(|x|)e^{\pm
		i|x|},
	\end{equation*}
	in which we have
	\begin{equation*}
		|\frac{d^j}{dr^j}c_{\pm}(r)|\geq r^{-j}, r\leq1/4.
	\end{equation*}
        So using polar coordinates and this formula we obtain the following integrals:
	\begin{equation}
		\sum_{\pm}
		(|\frac{\re\omega}{x}|)^{\frac{n-1}{2}}\int_{\frac{1}{4}}^{4}\frac{\alpha(r)c_{\pm}(r|\re\omega||x|)}{r-1+i\varepsilon
		}r^{\frac{n-1}{2}}e^{\pm i r \re\omega|x|}dr,\quad 0<|\varepsilon |<1.
		\label{integral}
	\end{equation}
	  Now let us 
	  recall the following lemma which was also proved in \cite{bssy}:
	\begin{lemma}
	 \begin{equation}
		 \int\frac{e^{-irt}}{r-1+i\varepsilon}dr=2\pi iH(\varepsilon t)e^{-it}e^{-|\varepsilon t|},
		 \label{f.t}
	 \end{equation}
	 in which $H(t)$ is the Heaviside function.
		\label{lemma2.1}
	\end{lemma}
	Now we can regard the integrals in \eqref{integral} as a convolution between the
	function in \eqref{f.t} and $b_{\pm}(t,|x|)$, 
	in which $b_{\pm}(t,|x|)$ denotes the $r$ Fourier transform of function
	$\alpha(r)c_{\pm}({r\re\omega|x|})r^{\frac{n-1}{2}}$. Notice that in particular we have
	\begin{equation}
		|D_x^{\gamma}D_t^{\alpha}b_{\pm}(t,|x|)|\leq C_{N,\gamma,\alpha}(1+t)^{-N}
		\label{b}
	\end{equation}
	due to the support of $|x|$ and the assumption that $|\re\omega|\cdot|x|>1$.
  	Without loss of generality we assume
	$\re\omega>0$ and $\varepsilon>0$ and the other cases can be handled similarly.
	Under such assumption the
	convolution is:
	\begin{equation}
		2\pi ie^{\mp i\re\omega|x|}(e^{-\re\omega\varepsilon|x|}\int_{-\infty}^{\re\omega|x|} e^{it}e^{\varepsilon
		t}b_{\pm}(t,|x|)dt).
		\label{convolution}
	\end{equation}
	Therefore the proof to the proposition will be completed as long as we can show
	that the function in the parentheses has uniformly bounded $x$ derivatives.

	In fact, we have
	\begin{equation*}\begin{split}
		&\partial _x^{\gamma}(e^{-\re\omega\varepsilon|x|}\int_{-\infty}^{\re\omega|x|} e^{it}e^{\varepsilon
		t}b_{\pm}(t,|x|)dt)\\
		=&\sum_{|j|+|k|=|\gamma|-1}C_{j,k}\partial_x^{j}(\re\omega\varepsilon|x|e^{-\re\omega\varepsilon|x|})\partial
		_x^{k}(e^{i\re\omega|x|}b_{\pm}(\re\omega|x|,|x|))\\&+\sum_{j+k=\gamma}C_{j,k}\partial
		_x^{j}(e^{i\re\omega\varepsilon|x|})\int_{-\infty}^{\re\omega|x|}e^{it}e^{-\varepsilon
		t}\partial
		_x^{k}(b_{\pm}(t,|x|))dt\\
		=&I+II.
	\end{split}
	\end{equation*}
	By the fact that $\varepsilon<1,|x|\approx 1$ and \eqref{b} we immediately see
	that $I$ is uniformly bounded. Now for $II$,
	after a $t\to -t+\re\omega$ variable substitution the terms in it are majorized  by the following 
	integral
	\begin{equation}\begin{split}
		({\re}\omega\varepsilon )^j\int_0^{\infty}e^{-\varepsilon
		t} |b_{\pm}^{(k)}(-t+{\re}\omega,|x| )|dt
		\leq C_{j,k,N}\int_0^{\infty}(\varepsilon {\re}\omega)^j(1+\varepsilon
		t)^{-j}(1+|t-{\re}\omega|)^{-N}dt. \end{split}
		\label{uniform_bound}
	\end{equation}
	By arguing the ratio between $t$ and $\re\omega$ we can immediately show that
	$II$ is also uniformly bounded. So the proof is complete.
	\end{proof}
Now, let $\varepsilon =\frac{2^j}{\lambda}$, so the kernel of $S_j$ is 
\begin{equation}
	S_j(x,y)=\frac{1}{\coe}\int_0^{\infty}\beta({t}/{\varepsilon
	})\rho(t)e^{\lambda t}e^{-\mu t}\cos tP(x,y)dt.\label{s_j}
\end{equation}
We should notice that due to the finite propagation speed of the wave operator $\cos tP$
the kernels $S_j(x,y)$ are actually supported in the region where $|\dis/\varepsilon|<2$,
so we have 
\begin{equation}
	S_j(x,y)=\rho(x,y,\varepsilon )S_j(x,y)
	\label{finite.speed}
\end{equation}
in which we recall that $\rho$ is a smooth bump function supported when $\dis\leq
4\varepsilon $. By such consideration we can therefore restrict the support of 
all the following operators to such a small region. 

By Euler's formula, in geodesic coordinates we have
\begin{equation}
	\cos tP(x,y)=\sum_{\pm}\int_{\R^n}e^{i\kappa(x,y)\cdot\xi}e^{i\pm
	t|\xi|}\alpha_{\pm}(t,x,y,|\xi|)d\xi +Q(t,x,y)\label{fourier.integral}
\end{equation}
in which $Q(t,x,y)$ is a smooth function with compact support in $t,x,y$,
$\kappa(x,y)$ are the geodesic coordinates of $x$ about $y$ and
$\alpha_{\pm}(t,x,y,|\xi|)$ are $0$-order symbol functions in $\xi$. So if we replace
the operator $\cos tP$ in \eqref{s_j} we have a operator with $(p,q)$ norm equal to
$O(\lambda^{-1}\varepsilon )$. Notice that due to the $t$ support in \eqref{s_j} we would
have that there are only $O(\log_2\lambda)$ many $S_j$ not vanishing. So summing over so
many $j$ will give us a $(p,q)$ bounded operator over the Sobolev line immediately. Notice
that by \eqref{finite.speed} we can assume that both the Fourier integral and $Q(t,x,y)$
are supported when $\dis/\varepsilon<4$.

So by abusing language a little bit, we can replace the wave operator $\cos tP$ with the
Fourier integral representation in \eqref{fourier.integral}. 
Now take a $(t,\xi)\to(\varepsilon t,\xi/\varepsilon )$ scaling in \eqref{s_j} we then
obtain
\begin{equation}
	S_j^{\pm}(x,y)=\frac{\varepsilon
	^{1-n}}{\coe}\int_{0}^{\infty}\int_{\R^n}\beta(t)\rho(\varepsilon
	t)e^{i\lambda\varepsilon t}e^{-\mu\varepsilon t}\alpha_{\pm}(\varepsilon
	t,x,y,{|\xi|}/{\varepsilon })e^{i\frac{\kappa(x,y)}{\varepsilon }\cdot\xi\pm
	t|\xi|}d\xi dt
	\label{k_j}
\end{equation}
for $\dis/\varepsilon <4$. However, when $\dis/\varepsilon \leq 1/4$, an integration by
parts argument with respect to $\xi$ would show that 
\begin{equation*}
	|S_j^{\pm}(x,y)|\leq \varepsilon ^{1-n}\lambda^{-1}\leq \dis^{2-n}2^{-j},
\end{equation*}
which is a $(p,q)$ bounded operator over the Sobolev line after summation over $j$. 
So we are reduced to considering the operator
$K_{j}^{\pm}=\beta(\frac{\dis}{2\varepsilon})S_j^{\pm}(x,y)$ in which $\beta(r)$ is supported when
$r\in(1/2,2)$.

We then proceed as in \cite{bssy}. More specifically, let $a_{\varepsilon
}^{\pm}(\tau,x,y,|\xi|)$ denote the inverse Fourier transform of 
\begin{equation*}
	t\to\beta(t)\rho(\varepsilon t)\alpha_{\pm}(\varepsilon t,x,y,|\xi|/\varepsilon ),	
\end{equation*}
in which
\begin{equation}
	|D_{\xi}^{\gamma}a_{\varepsilon }^{\pm}(\tau,x,y,|\xi|)|\leq
	C_{N,\gamma}(1+\tau)^{-N}|\xi|^{-|\tau|}.
	\label{tau.decay}
\end{equation}
Then after the Fourier transform in \eqref{k_j} we would have
\begin{equation}
	K_j^{\pm}(x,y)=\varepsilon ^{2-n}\int\int_{\R^n}e^{i\frac{\kappa(x,y)}{\varepsilon
	}\cdot\xi}\frac{a_{\varepsilon }^{\pm}(\tau,x,y,|\xi|)}{(\pm|\xi|-\tau-\varepsilon
	\lambda-i\varepsilon \mu)(\pm|\xi|-\tau+\varepsilon \lambda+i\varepsilon \mu)}d\xi
	d\tau.
	\label{dajifen}
\end{equation}
Now if we split the fraction in the integrand as following
\begin{equation*}\begin{split}
	&\frac{1}{(\pm|\xi|-\tau-\varepsilon
	\lambda-i\varepsilon \mu)(\pm|\xi|-\tau+\varepsilon \lambda+i\varepsilon
	\mu)}\\=&\frac{1}{2(\varepsilon \lambda+i\varepsilon
	\mu)}(\frac{1}{\pm|\xi|-\tau-\varepsilon \lambda-i\varepsilon
	\mu}-\frac{1}{\pm|\xi|-\tau+\varepsilon \lambda+i\varepsilon \mu})\end{split}
\end{equation*}
then we would have, after applying Proposition \ref{prop2.4},
\begin{equation}
	K_j^{\pm}(x,y)=2^{-j}\varepsilon ^{2-n}a_{1,\omega}(x,y)+2^{-j}\varepsilon ^{2-n}\sum_{\pm}\int e^{\pm i(\tau\pm\varepsilon
	\lambda)\frac{\dis}{\varepsilon }}\cdot|\tau\pm\varepsilon
	\lambda|^{\frac{n-1}{2}}a_{\varepsilon }(\tau,x,y)d\tau
	\label{split}
\end{equation}
in which $a_{1,\omega}(x,y)$ is a uniformly bounded smooth function with support when
$\dis/\varepsilon\in(1/4,4) $, and $a_{\varepsilon }(\tau,x,y)$ is virtually the function
$a_{2,\omega}$ in \eqref{onedajifen} and it particular, it has the
following properties
\begin{equation}
	|\partial_{x,y}^{\gamma}a_{\varepsilon }(\tau,x,y)|\leq
	C_{\gamma,N}(1+\tau)^{-N}\varepsilon ^{-|\gamma|}
	\label{c.s.pre.condition}
\end{equation}
also due to the fact that $\dis/\varepsilon \approx1$.

Now for the $2^{-j}\varepsilon ^{2-n}a_{1,\omega}(x,y)$ piece, a simple calculation shows that
its
$L^1\to L^{\infty}$ norm is $2^{-j}\varepsilon ^{2-n}$ and the $L^p\to L^p$ norm is
$2^{-j}\varepsilon ^2,1\leq p\leq\infty$. So interpolation shows that these operators sum up to a
$(p,q)$ bounded operator on the Sobolev line. Therefore it reduces to analyzing the second operator in
\eqref{split}. After carrying out the $\tau$ integral, we immediately see that we are
further
reduced, without loss of generality, to estimating the $(p,q)$ bounds over the Sobolev
line for the following operators
\begin{equation}
	T_j(x,y)=2^{-j}\varepsilon ^{2-n}2^{j\frac{n-1}{2}}e^{i\lambda\dis}a_{\varepsilon
	}(x,y)=\lambda^{\frac{n-3}{2}}\varepsilon ^{-\frac{n-1}{2}}e^{i\lambda\dis}a_{\varepsilon
	}(x,y)
	\label{c-s.operator}
\end{equation}
in which the smooth function $a_{\varepsilon}(x,y)$ is supported when
$\dis/\varepsilon\in(1/4,4)$, and 
\begin{equation}
	|\partial_{x,y}^{\gamma}a_{\varepsilon }(x,y)|\leq C_{\gamma}\varepsilon
	^{-|\gamma|}.\label{t_j.support}
\end{equation}

Now the rest part will be standard procedure as in \cite{Soggebook} since we know the function $d_g(x,y)$ satisfies the
Carleson-Sj\"{o}lin condition, and this  was also done similarly in \cite{Kenig}. See
\cite{KRS} also for the Euclidean case. For the reader's
convenience we state it briefly as follows. First, \eqref{c.s.pre.condition} reminds us that if we scale
$x,y$ back to unit-length by $(x,y)\to(\varepsilon x,\varepsilon y)$ we have
\begin{equation}
	|\partial_{x,y}^{\gamma}a_{\varepsilon }(\varepsilon x,\varepsilon y)|
	\leq C_{\gamma}.
	\label{scaled_derivative_estimate}
\end{equation}
Also the phase function now is $\lambda d_g(\varepsilon x,\varepsilon y)= 2^jd_g(\varepsilon
x,\varepsilon y)/\varepsilon $ and $d_g(\varepsilon x,\varepsilon y)/\varepsilon $ will
satisfy Carleson-Sj\"{o}lin condition such that the hypersurface
$\nabla_{x}d_g(\varepsilon x,\varepsilon y_0)/\varepsilon $ will have curvature bounded
away from zero. So, if $f(y)$ is a test function, then we have, if 
\begin{equation}
	\frac{1}{q}=\frac{n-1}{n+1}(1-\frac{1}{p}), \quad 1\leq p\leq 2,
	\label{cs-line}
\end{equation}
that
\begin{equation}
	\begin{split}
		&||T_j(x,y)f(y)||_{q}\\
		=&\lambda^{\frac{n-3}{2}}\varepsilon ^{-\frac{n-1}{2}}(\int|\int
		e^{-i\lambda d_g(x,y)}a_{\varepsilon }(x,y)f(y)dy|^pdx)^{\frac{1}{q}}\\
		=&\lambda^{\frac{n-3}{2}}\varepsilon ^{-\frac{n-1}{2}}\varepsilon
		^{n+\frac{n}{q}}(\int|\int
		e^{-i2^jd_g(\varepsilon x,\varepsilon y)/\varepsilon
		}a_{\varepsilon }(\varepsilon x,\varepsilon y)f(\varepsilon y)dy|^pdx)^{\frac{1}{q}}\\
		\lesssim&\lambda^{\frac{n-3}{2}}\varepsilon ^{-\frac{n-1}{2}}\varepsilon
		^{n+\frac{n}{q}}2^{\frac{-jn}{q}}\varepsilon ^{-\frac{n}{p}}||f||_p\\
		=&\lambda^{-2+n(\frac{1}{p}-\frac{1}{q})}2^{j(\frac{n+1}{2}-\frac{n}{p})}||f||_p.
	\end{split}
	\label{final_estimate}
\end{equation}
Here the inequality is due to the standard $n\times n$ Carleson-Sj\"{o}lin estimate in
\cite{Soggebook} (Theorem 2.2.1). On the other hand, if we apply Young's inequality to the kernel
$T_j(x,y)$ as in \eqref{c-s.operator}, we also see that the kernel on the line
$(1/p,0)$ will be $L^p\to L^q$ bounded by the same norm as in \eqref{final_estimate} with
corresponding exponent. Now
a simple interpolation shows that our local operator is $L^p\to L^q$ bounded when $(1/p,1/q)$
are on $\overline{CC'}$, with end points removed. 

\begin{remark}
	Through Lemma \ref{-2symbol} and the oscillatory
	integral \eqref{dajifen} the reader may have noticed that our local operator
	$\local(P)$ bears great
	similarity compared with the classical Hadamard parametrix to
	$(\Delta_g+\zeta)^{-1}$ used in \cite{Kenig}. This
	is in fact not surprising since they are essentially the same operator expressed
	in different ways. The reader may also compare this result with Theorem 2.2 in
	\cite{KRS}, which is the Euclidean case when we replace the local resolvent by the
	resolvent $(\Delta_{\R^n}+\zeta)^{-1}$ to see the similarity between the local
	operator and its Euclidean counterpart. 
	
	Also when $n=2$, due to the fact that a
	$-n$ order pesudodifferential operator in $\R^n$ has kernel bounded by
	$\log|x-y|^{-1}$ (see for example \cite{taylor}), the readers can easily check the
	local operator $\local(P)$ is now bounded from $H^1(M)$, the Hardy space on
	compact manifolds which is
	defined in \cite{strichartz}, to $L^{\infty}(M)$.
\end{remark}

\section{Non-local Operator}

Now let us deal with the non-local operator 
\begin{equation}
	r_{\lambda,\mu}(P)=\frac{1}{i(\lambda+i\mu)}\int_0^{\infty}(1-\rho(t))e^{i\lambda
	t}e^{-\mu t}\cos tPdt\label{nonlocal}
\end{equation}
in which we assume that $\mu\geq 1$. This operator is  more easier than its local
counterpart to handle and we prove:
\begin{theorem}
	The non-local operator $r_{\lambda,\mu}$(P) defined in \eqref{nonlocal} is a
	uniformly bounded operator from $L^p(M)$ to $L^q(
	M)$ if $1/p-1/q=2/n$ and 
	$p\leq {2(n+1)}/{n+3},q\geq {2(n+1)}/{(n-1)}$.
	Notice that this is the segment $\overline{AA'}$ given in figure \ref{figure}.
	\label{nonlocal_theorem}
\end{theorem}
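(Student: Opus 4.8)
The plan is to decompose the spectrum of $P$ into unit-length clusters and estimate the multiplier $r_{\lambda,\mu}$ on each cluster, following the strategy of \cite{bssy} but now using the asymmetric $L^p\to L^2\to L^q$ composition recorded in Lemma \ref{lemma2.3}. First I would fix a Littlewood--Paley type partition $1=\sum_{k\geq0}\beta(2^{-k}\cdot)$ (reusing $\beta$ from \eqref{littlewood.paley.function.property}) and, for each integer $k\geq1$, look at the piece of $r_{\lambda,\mu}(P)$ acting on the spectral window $\{\tau:|\tau-\lambda|\approx 2^k\}$, together with the window $|\tau-\lambda|\lesssim1$ near the pole. Since $1-\rho(t)$ vanishes for $t$ near $0$, integration by parts in $t$ in \eqref{nonlocal} shows that the multiplier $m_{\lambda,\mu}(\tau)=\frac{1}{i(\lambda+i\mu)}\int_0^\infty(1-\rho(t))e^{i\lambda t}e^{-\mu t}\cos t\tau\,dt$ is a nice symbol: each integration by parts using \eqref{cos} produces factors $(\tau\pm\lambda\pm i\mu)^{-1}$, so on the window $|\tau-\lambda|\approx2^k$ one gains $|m_{\lambda,\mu}(\tau)|\lesssim \lambda^{-1}(2^k)^{-N}$ for any $N$ (with a $\lambda^{-1}\langle\tau-\lambda\rangle^{-1}$-type bound, uniform in $\mu\geq1$, near the pole), and similarly for derivatives in $\tau$.

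Next I would invoke Sogge's spectral projection estimates: for the unit-band projector $\chi_I$ onto $\{\tau\in I\}$ with $|I|=1$ centered at a point of size $\sim\lambda_j\sim\lambda$, one has $\|\chi_I\|_{L^p\to L^2}\lesssim \lambda^{\delta(p)}$ and $\|\chi_I\|_{L^2\to L^q}\lesssim\lambda^{\delta(q')}$ with $\delta(p)=n(\tfrac1p-\tfrac12)-\tfrac12$, valid precisely when $p\le 2(n+1)/(n+3)$ and $q\ge 2(n+1)/(n-1)$ (these are the exponents at which the $L^2$-based estimates are governed by the $\delta(p)$ exponent rather than the Stein--Tomas one). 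Composing, a single unit band contributes $\lesssim \lambda^{\delta(p)+\delta(q')}\|m_{\lambda,\mu}\|_{\infty,\text{band}}$ to the $L^p\to L^q$ norm. On the Sobolev line $1/p-1/q=2/n$ one computes $\delta(p)+\delta(q')=n(\tfrac1p-\tfrac1q)-2 = 2-2=0$ — this is the arithmetic miracle that makes the argument work — so each unit band contributes $\lesssim\lambda^{\delta(p)+\delta(q')}\cdot\lambda^{-1}=\lambda^{-1}$ near the pole. Summing the $O(\lambda)$ unit bands inside $|\tau-\lambda|\lesssim\lambda$ with the harmless $\langle\tau-\lambda\rangle^{-1}$ and dyadic $(2^k)^{-N}$ gains (which give a convergent sum over $k$ and over the $O(2^k)$ bands in each dyadic block), and separately handling the region $\tau\lesssim\lambda/2$ where $m_{\lambda,\mu}$ is $O(\lambda^{-2})$ (a $-2$ order symbol, giving an $L^p\to L^q$ bounded operator on the Sobolev line exactly as for $S_0$ via Hardy--Littlewood--Sobolev), yields a total $L^p\to L^q$ bound uniform in $\lambda,\mu$.

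The main obstacle I expect is the logarithmic divergence lurking in the sum over the $O(\lambda)$ unit bands near the pole: the naive bound $\sum_{|\lambda_j-\lambda|\le\lambda}\lambda^{-1}\langle\lambda_j-\lambda\rangle^{-1}\|\chi_{I_j}f\|$ only gives a $\log\lambda$ loss unless one is careful to (i) use the almost-orthogonality of the $\chi_{I_j}$ in $L^2$ — i.e. apply Cauchy--Schwarz in $j$ against the $\langle\lambda_j-\lambda\rangle^{-1}$ weights rather than triangle-inequality in $L^q$ — so that the square-summability $\sum_j\|\chi_{I_j}f\|_{L^2}^2\lesssim\|f\|_{L^2}^2$ can be exploited through the $L^p\to L^2$ half, and (ii) exploit that $\langle\lambda_j-\lambda\rangle^{-2}$ (not $\langle\lambda_j-\lambda\rangle^{-1}$) appears once both a genuine $\tau$-derivative decay and the $(\lambda+i\mu)^{-1}$ prefactor are accounted for, killing the logarithm. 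This is exactly the point where Lemma \ref{lemma2.3} is designed to be applied, so the bulk of the work is bookkeeping to reduce \eqref{nonlocal} to a form where that lemma applies verbatim; the case $\mu<1$, excluded here, would be where real care is needed, but by hypothesis $\mu\ge1$ so the pole stays at distance $\gtrsim1$ from the spectrum and no such difficulty arises.
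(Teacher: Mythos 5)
Your approach is essentially the paper's: decompose into unit-length spectral clusters, use the multiplier decay $|r_{\lambda,\mu}(\tau)|\lesssim\lambda^{-1}(1+|\lambda-\tau|)^{-N}$ obtained by integration by parts (valid for every $N$ since $\mu\geq1$), and compose Sogge's projection bounds asymmetrically as $L^p\to L^2\to L^q$ --- which is exactly Lemma \ref{lemma2.3.1}. The proof does close, but you have an arithmetic slip in the key exponent: on the Sobolev line
\[
\delta(p)+\delta(q')=\Bigl[n\bigl(\tfrac1p-\tfrac12\bigr)-\tfrac12\Bigr]+\Bigl[n\bigl(\tfrac12-\tfrac1q\bigr)-\tfrac12\Bigr]=n\bigl(\tfrac1p-\tfrac1q\bigr)-1=1,
\]
not $0$ (you subtracted $2$ instead of $1$). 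Consequently a unit band centered at $k$ contributes $\lesssim k\cdot\lambda^{-1}(1+|\lambda-k|)^{-N}$ rather than $\lambda^{-1}$, so the sum does \emph{not} close by "adding $O(\lambda)$ bands each of size $\lambda^{-1}$"; it closes because the rapid decay $(1+|\lambda-k|)^{-N}$ with $N\geq2$ (the paper takes $N=3$) absorbs the factor $k\approx\lambda$ near the pole and controls the tails. For the same reason, the logarithmic divergence you worry about at the end, and the Cauchy--Schwarz/almost-orthogonality machinery you propose to cure it, never arise: with $\mu\geq1$ the multiplier decays to arbitrary order in $|\lambda-\tau|$, not merely to first order. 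Your separate Hardy--Littlewood--Sobolev treatment of the region $\tau\lesssim\lambda/2$ is harmless but unnecessary, since those bands are already summable inside the cluster sum.
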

Similar to \cite{bssy}, the proof is based on the following lemma:
\begin{lemma}
	Given a fixed compact Riemannian manifold of dimension $n\geq 3$ there is a
	constant $C$ so that whenever $\alpha\in C(\mathbb{R}_+)$ and let
	\begin{equation*}
		\alpha_k(P)f=\sum_{\lambda_j\in[k-1,k)}\alpha(\lambda_j)E_jf,k=1,2,\dots
	\end{equation*}
	then we have
	\begin{equation}
		||\alpha_k(P)f||_{L^q(M)}\leq C
		k(\sup_{\tau\in[k-1,k)}|\alpha(\tau)|)||f||_{L^p(M)}
		\label{lemma2.3}
	\end{equation}
	if $1/p-1/q=2/n$ and $p\leq2(n+1)/(n+3),q\geq2(n+1)/(n-1)$.
	\label{lemma2.3.1}
\end{lemma}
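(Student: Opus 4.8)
The plan is to factor the operator $\alpha_k(P)$ through $L^2(M)$ in an \emph{asymmetric} way, using Sogge's unit-band spectral cluster estimates together with the elementary observation that on the Sobolev line the two relevant Sogge exponents add up to exactly $1$. Write $\chi_k=\sum_{\lambda_j\in[k-1,k)}E_j$ for the projection onto the $k$-th unit spectral band of $P$. Since each eigenvalue occurring in the sum defining $\alpha_k(P)$ satisfies $|\alpha(\lambda_j)|\leq\sup_{\tau\in[k-1,k)}|\alpha(\tau)|$, the orthogonality of the eigenspaces gives $\|\alpha_k(P)h\|_{L^2(M)}\leq\big(\sup_{\tau\in[k-1,k)}|\alpha(\tau)|\big)\|h\|_{L^2(M)}$, and moreover $\alpha_k(P)=\chi_k\,\alpha_k(P)\,\chi_k$.

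First I would recall Sogge's spectral projection bound (see e.g.\ \cite{Soggebook}): for $q\geq 2(n+1)/(n-1)$ one has $\|\chi_k h\|_{L^q(M)}\leq Ck^{\delta(q)}\|h\|_{L^2(M)}$, the exponent $\delta(q)=n(\tfrac12-\tfrac1q)-\tfrac12$ being sharp precisely in this range (below it the governing exponent is the smaller Stein--Tomas one $\tfrac{n-1}{2}(\tfrac12-\tfrac1q)$, and the arithmetic below would not close). Dualizing, using that $\chi_k$ is self-adjoint together with $\delta(p')=\delta(p)$, for the conjugate condition $p\leq 2(n+1)/(n+3)=\big(2(n+1)/(n-1)\big)'$ we obtain $\|\chi_k h\|_{L^2(M)}\leq Ck^{\delta(p)}\|h\|_{L^p(M)}$. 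Composing these two bounds around the $L^2\to L^2$ estimate for $\alpha_k(P)$ yields
$$
\|\alpha_k(P)f\|_{L^q(M)}\leq Ck^{\delta(q)}\|\alpha_k(P)\chi_k f\|_{L^2(M)}\leq Ck^{\delta(q)}\Big(\sup_{\tau\in[k-1,k)}|\alpha(\tau)|\Big)\|\chi_k f\|_{L^2(M)}\leq Ck^{\delta(q)+\delta(p)}\Big(\sup_{\tau\in[k-1,k)}|\alpha(\tau)|\Big)\|f\|_{L^p(M)}.
$$

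Finally I would compute the total exponent: since $p\leq 2\leq q$ and $1/p-1/q=2/n$,
$$
\delta(p)+\delta(q)=n\Big(\frac1p-\frac12\Big)+n\Big(\frac12-\frac1q\Big)-1=n\Big(\frac1p-\frac1q\Big)-1=2-1=1,
$$
which is exactly the gain $k^1$ claimed in \eqref{lemma2.3}. The step requiring the most care is the appeal to Sogge's estimate in precisely the range $q\geq 2(n+1)/(n-1)$ (equivalently $p\leq 2(n+1)/(n+3)$): this is the unique range on the Sobolev line where the two one-sided cluster bounds combine to the clean factor $k$, which is why it reappears as the admissible range $\overline{AA'}$ in Theorem \ref{nonlocal_theorem}. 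Note that this asymmetric $L^p\to L^2\to L^q$ factorization replaces the $TT^*$ scheme used for Lemma 2.3 of \cite{bssy} and is what singles out the Sobolev line here.
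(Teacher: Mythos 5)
Your proof is correct and follows essentially the same route as the paper: factor $\alpha_k(P)=\chi_k\alpha_k(P)\chi_k$, apply Sogge's $L^p\to L^2$ and $L^2\to L^q$ cluster bounds in the stated ranges, and use that $\delta(p)+\delta(q)=1$ on the Sobolev line. Nothing to add.
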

To prove the lemma firstly, let us recall the following theorem in \cite{Soggebook} and
\cite{Sogge1}:

\begin{theorem*}
	If $\chi_{\lambda}$ denotes the spectral cluster projection of the operator $P=\sqrt{-\Delta_g}$,
	namely $\chi_{\lambda}f=\sum_{\lambda_j\in[\lambda-1,\lambda)}E_jf$, then we have the following
	estimates
	\begin{equation}
		||\chi_{\lambda}f||_{L^2(M)}\leq
		C(1+\lambda)^{\delta(p)}||f||_{L^p(M)},\quad 1\leq
		p\leq\frac{2(n+1)}{n+3},
		\label{spectral1}
	\end{equation}
	and
	\begin{equation}
		||\chi_{\lambda}||_{L^q(M)}\leq
		C(1+\lambda)^{\delta(q)}||f||_{L^2(M)},\quad
		\frac{2(n+1)
		}{n-1}\leq q\leq \infty
		\label{spectral2}
	\end{equation}
	in which $\delta(p)=n|\frac{1}{p}-\frac{1}{2}|-\frac{1}{2}$.
	\label{sogge_theorem}
\end{theorem*}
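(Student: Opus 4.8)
The plan is to deduce both inequalities from the wave-equation/parametrix analysis that underlies the Stein--Tomas restriction theorem. First one reduces to a single inequality: since $\chi_\lambda$ is the orthogonal projection onto $\bigoplus_{\lambda_j\in[\lambda-1,\lambda)}\ker(P-\lambda_j)$ it is self-adjoint, so $\|\chi_\lambda\|_{L^2\to L^q}=\|\chi_\lambda\|_{L^{q'}\to L^2}$; as $\delta(q)=\delta(q')$ and $q\ge 2(n+1)/(n-1)$ is equivalent to $q'\le 2(n+1)/(n+3)$, \eqref{spectral2} is exactly the dual of \eqref{spectral1}. Since $\delta(p)$ is affine in $1/p$ and the target estimate interpolates in $p$ by Riesz--Thorin, it then suffices to prove \eqref{spectral1} at the two endpoints $p=1$ and $p_*:=2(n+1)/(n+3)$. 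For this I would pass from $\chi_\lambda$ to a smooth reproducing operator: fixing $\psi\in\mathcal S(\R)$ with $\widehat\psi$ supported near $0$ and $|\psi|\ge c>0$ on $[0,1]$ (such $\psi$ exist), one has $\|\chi_\lambda f\|_2\le C\|\psi(\lambda-P)f\|_2$, and writing $\Psi=|\psi|^2$ the $T^*T$ identity gives
\begin{equation*}
\|\psi(\lambda-P)\|_{L^p\to L^2}^2=\|\Psi(\lambda-P)\|_{L^p\to L^{p'}}.
\end{equation*}
So the two endpoints amount to $\|\Psi(\lambda-P)\|_{L^1\to L^\infty}\lesssim\lambda^{n-1}$ and $\|\Psi(\lambda-P)\|_{L^{p_*}\to L^{p_*'}}\lesssim\lambda^{(n-1)/(n+1)}$, the right-hand exponents being $2\delta(1)$ and $2\delta(p_*)$.

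Next I would compute the kernel $K_\lambda(x,y)$ of $\Psi(\lambda-P)=\tfrac1{2\pi}\int\widehat\Psi(t)e^{it\lambda}e^{-itP}\,dt$ using the Hadamard--Lax parametrix for the half-wave group on the small support of $\widehat\Psi$ --- the same construction recalled in Section~2, cf.\ also \cite{Kenig,KRS}. After discarding the low-frequency (smoothing) contribution and an $O(\lambda^{-N})$ remainder one finds that $K_\lambda$ is supported where $\dis\le c$, obeys $|K_\lambda(x,y)|\lesssim\lambda^{n-1}(1+\lambda\dis)^{-(n-1)/2}$, and in the range $\lambda^{-1}\lesssim\dis\le c$ equals $\lambda^{(n-1)/2}\sum_{\pm}e^{\pm i\lambda\dis}a_\pm(x,y)$ with $|\partial_{x,y}^\gamma a_\pm|\lesssim_\gamma\dis^{-(n-1)/2-|\gamma|}$; the essential geometric input is that the cospheres of $P$ have nonvanishing Gaussian curvature, i.e.\ $\dis$ is a Carleson--Sj\"olin phase. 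The case $p=1$ is then immediate, since $\|\Psi(\lambda-P)\|_{L^1\to L^\infty}=\sup_{x,y}|K_\lambda(x,y)|\lesssim\lambda^{n-1}$ (this is the upper half of H\"ormander's pointwise Weyl law).

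The case $p=p_*$ is the crux: it is a variable-coefficient endpoint Stein--Tomas estimate. I would split $K_\lambda=K_\lambda^{\mathrm{diag}}+\sum_{1\le\ell\lesssim\log\lambda}K_\lambda^{(\ell)}$ with $K_\lambda^{\mathrm{diag}}$ supported in $\{\dis\lesssim\lambda^{-1}\}$ and $K_\lambda^{(\ell)}$ in $\{\dis\sim 2^\ell/\lambda\}$. The diagonal part carries no oscillation and is controlled by crude size-and-support bounds ($\|K_\lambda^{\mathrm{diag}}\|_{L^1\to L^\infty}\lesssim\lambda^{n-1}$, $\|K_\lambda^{\mathrm{diag}}\|_{L^1\to L^1}+\|K_\lambda^{\mathrm{diag}}\|_{L^\infty\to L^\infty}\lesssim\lambda^{-1}$), and interpolation gives exactly $\lambda^{2\delta(p_*)}$. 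For each oscillatory piece I would rescale $x,y$ by $2^\ell/\lambda$, turning $K_\lambda^{(\ell)}$ (up to normalising powers of $\lambda$ and $2^\ell$) into a unit-scale oscillatory integral operator with Carleson--Sj\"olin phase of magnitude $2^\ell$ and amplitude having uniformly bounded derivatives, and then run the Stein--Tomas $TT^*$ scheme on these operators: the curvature of the cospheres makes the kernel of the iterated operator decay like the Fourier transform of the cosphere surface measure, which is precisely the input for the $L^2\to L^2$ half of the interpolation, the $L^1\to L^\infty$ half being elementary. Summing over the $O(\log\lambda)$ scales then produces $\lambda^{2\delta(p_*)}$, and dualising yields \eqref{spectral2}.

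The main obstacle is this last step, and within it two points. First, the $L^2\to L^2$ bounds on the rescaled pieces must be obtained \emph{sharply}: applying Schur's test directly (i.e.\ taking absolute values in the iterated $TT^*$ kernel) loses a whole factor of two in the relevant exponent, so one genuinely has to exploit the oscillation of that kernel via the curvature of the cospheres, which is the real content of Stein--Tomas. Second, $p_*$ is the \emph{critical} exponent, so estimating the dyadic pieces individually and adding them only gives $\lambda^{2\delta(p_*)}\log\lambda$; removing the logarithm requires the endpoint device --- Stein's analytic interpolation applied to a suitable analytic family built into the parametrix, exactly as in the passage from Stein's restriction theorem to Tomas's endpoint version. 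The remaining ingredients --- the two reductions, the parametrix, and the diagonal estimate --- are routine.
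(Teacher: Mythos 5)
You should first be aware that the paper does not prove this statement at all: it is recalled verbatim as Sogge's spectral cluster theorem and simply cited from \cite{Sogge1} and \cite{Soggebook}, so your attempt can only be compared with the proof in those references --- and your outline is essentially that proof. Your reductions are all correct: \eqref{spectral2} is the dual of \eqref{spectral1} since $\chi_\lambda$ is a self-adjoint projection, $\delta(q)=\delta(q')$ and the exponent ranges are dual; Riesz--Thorin reduces \eqref{spectral1} to the endpoints $p=1$ and $p_*=2(n+1)/(n+3)$ because $\delta$ is affine in $1/p$; the passage to the smooth reproducing operator $\psi(\lambda-P)$ with $\hat\psi$ compactly supported, the $TT^*$ identity, the half-wave parametrix bounds $|K_\lambda|\lesssim\lambda^{n-1}(1+\lambda\dis)^{-(n-1)/2}$ together with the oscillatory representation with Carleson--Sj\"olin phase $\dis$, and the diagonal estimate (whose interpolation indeed gives exactly $\lambda^{(n-1)/(n+1)}=\lambda^{2\delta(p_*)}$) are the standard steps. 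However, as a proof the attempt stops precisely at the theorem's analytic core: the endpoint estimate at $p_*$ is only named --- ``run the Stein--Tomas $TT^*$ scheme'' and ``Stein's analytic interpolation applied to a suitable analytic family'' --- and neither the sharp $L^2$ bound exploiting the curvature of the cospheres nor the construction of the analytic family is carried out. You correctly identify where the difficulty lies, but that step is the theorem, so there is a genuine (if expected) gap.

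One concrete suggestion that would both simplify your plan and remove the logarithmic-summation worry you raise: instead of bounding $\Psi(\lambda-P)$ on $L^{p_*}\to L^{p_*'}$ by adding dyadic pieces at the critical pair, prove the dual endpoint \eqref{spectral2} at $q_*=2(n+1)/(n-1)$ directly by applying the $L^2\to L^q$ H\"ormander--Carleson--Sj\"olin oscillatory integral theorem (the Stein--Tomas-type result in \cite{Soggebook}, Chapter 2) to each dyadic piece of the kernel of $\psi(\lambda-P)$ supported where $\dis\sim 2^\ell/\lambda$. After rescaling by $2^\ell/\lambda$ the piece is a unit-scale Carleson--Sj\"olin operator with parameter $2^\ell$ and bounded amplitude, and unwinding the scaling gives an $L^2\to L^{q_*}$ bound of size $\lambda^{\delta(q_*)}\bigl(2^\ell/\lambda\bigr)^{1/2}$, so the sum over scales is geometric and dominated by the unit scale: no logarithm appears and no separate endpoint interpolation device is needed at the level of the summation (the Stein--Tomas interpolation is of course hidden inside the proof of the cited oscillatory integral theorem). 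This is in effect how \cite{Soggebook} deduces the cluster estimates, and it would let your outline close without constructing an analytic family by hand.
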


Now since $\alpha_k(P)=\chi_k\alpha_k(P)\chi_k$, we have
\begin{equation*}
	\begin{split}
		||\alpha_k(P)f||_{L^q(M)}&\lesssim
		k^{\delta(q)}||\alpha_k(P)\chi_kf||_{L^2(M)}\\
		&\lesssim k^{\delta(q)}(\sup_{\tau\in[k-1,k)}|\alpha(\tau)|)||\chi_k
		f||_{L^2(M)}\\
		&\lesssim
		k^{\delta(q)+\delta(p)}(\sup_{\tau\in[k-1,k)}|\alpha(\tau)|)||f||_{L^p(M)}.
	\end{split}
\end{equation*}
On the Sobolev line we happen to have $\delta(p)+\delta(q)=1$. This completes the proof to the
lemma. Now to prove the Theorem \ref{nonlocal_theorem}, we just need to notice that under our
assumption $\mu\geq 1$ we have the following estimate:
\begin{equation}
	|r_{\lambda,\mu}(\tau)|\leq
	C_N\lambda^{-1}((1+|\lambda-\tau|)^{-N}+(1+|\lambda+\tau|)^{-N}),\quad
	\lambda,\mu\geq1.
	\label{lastone}
\end{equation}
So now the Lemma \eqref{lemma2.3.1} comes into application immediately if we take $N=3$ as
following:
\begin{equation}
	||r_{\lambda,\mu}(P)||_{L^p\to L^q}\leq
	C\sum_{k=1}^{\infty}k\lambda^{-1}(1+|\lambda-k|)^{-3}\leq C'
	\label{nonlocal.last}
\end{equation}

In the previous section we have already proved that 
that when $(1/p,1/q)$ are in the range $\overline{AA'}$ in figure \ref{figure} the local operator is
uniformly bounded between $L^p(M)$ and $L^q(M)$ with only the requirement that $\mu\neq0$. Theorem
\ref{main_theorem} is therefore proved.

\begin{proof}
	[Proof of Theorem \ref{equivalence}]
With the help of Lemma \eqref{lemma2.3.1} we can prove Theorem \ref{equivalence} now. 
As we have seen in Section 2 that if we replace the resolvent in \eqref{1.14} by
the local operator $\local(P)$ then the estimates hold automatically, therefore
we need only to prove that the non-local operator $r_{\lambda,\mu}(P)$ satisfies
the same estimates
\begin{equation}
	||r_{\lambda,\mu}f||_{L^q(M)}\leq C||f||_{L^p(M)},
	\label{nonlocal.p.q.}
\end{equation}
under the assumption of \eqref{1.13} and in particular when $|\mu|\lesssim1$.
This is again an application of
Lemma \ref{lemma2.3.1} with finer arguments.

First, integration by parts in
\eqref{nonlocal} shows that we have
\begin{equation}
	|r_{\lambda,\mu}(\tau)|\leq C\lambda^{-1}\left[
	(1+|\lambda-\tau|)^{-N}+|\mu|^{-1}(1+|\mu|^{-1}|\lambda-\tau|)^{-N} \right],\quad
	N=0,1,\dots
	\label{nonlocal.multiplier}
\end{equation}
Now, assume we have a positive number $\alpha$ such that
$\varepsilon_p(\lambda)\leq\alpha\leq1$ in which $p$ is any number within the range mentioned
in Theorem \ref{equivalence}. Notice that if we partition the interval
$[\lambda-\alpha,\lambda+\alpha]$ evenly into $O(\alpha/\varepsilon _p(\lambda))$ pieces
of small interval of length $\varepsilon _p(\lambda)$, then by Minkowski inequality, $L^2$
orthogonality and our special requirement on $\varepsilon _p(\lambda)$ that $\varepsilon
_p(\lambda)\approx\varepsilon _p(\lambda+1)$, we immediately have
\begin{equation}
	||\sum_{|\lambda-\lambda_j|\leq\alpha}E_jf||_{L^{p'}(M)}\leq
	C\lambda^{2\delta(p)}\alpha||f||_{L^p(M)}, \varepsilon_p(\lambda)\leq\alpha\leq1
	\label{small.to.large}
\end{equation}
In particular the constant $C$ is uniform. 
This amounts to saying that if the shrinking spectral estimates hold for a smaller spectral
cluster, they must hold as well for any larger cluster (but still shorter than unit
length, of course). Now using $L^2$ orthogonality again, if we have
$\lambda'\in[\lambda-1,\lambda+1]$, we have
\begin{equation}
	||\sum_{|\lambda'-\lambda_j|\leq\alpha}r_{\lambda,\mu}(\lambda_j)E_jf||_{L^{p'}(M)}\leq
	C\alpha\lambda^{2\delta(p)}(\sup_{|\lambda'-\tau|\leq\alpha}|r_{\lambda,\mu}(\tau)|)||f||_{L^p(M)}
	,\, \varepsilon
	_p(\lambda)\leq\alpha\leq1.
	\label{small.to.large.sup}
\end{equation}
So if we use $TT^*$ arguments to break
\eqref{small.to.large} into $L^p(M)\to L^2(M)$ and $L^2(M)\to L^q(M)$ then compose them
together while choosing $\alpha=|\mu|$, we  have
\begin{equation}
	||\sum_{|\lambda'-\lambda_j|\leq|\mu|}r_{\lambda,\mu}(\lambda_j)E_jf||_{L^{q}(M)}\leq
	C|\mu|\lambda(\sup_{|\lambda'-\tau|\leq|\mu|}|r_{\lambda,\mu}(\tau)|)||f||_{L^p(M)},\, 
	\label{mu.p.q}
\end{equation}
in which $|\mu|\geq\max\left\{ \varepsilon _p(\lambda),\varepsilon _{q'}(\lambda)
\right\}$. 

Now for the non-local operator we have
\begin{equation}
	||r_{\lambda,\mu}f||_{L^q(M)}\leq||\sum_{|\lambda-\lambda_j|>1}r_{\lambda,\mu}(\lambda_j)E_jf||_{L^q(M)}
	+||\sum_{|\lambda-\lambda_j|\leq1}r_{\lambda,\mu}(\lambda_j)E_jf||_{L^q(M)}.
	\label{nonlocal.fenduan}
\end{equation}
For the first summand, we simply use the Lemma \ref{lemma2.3.1} to control it. For the
second summand, we can evenly partition $[\lambda-1,\lambda+1]$ into small intervals
$I_k$ of
length comparable to $|\mu|$ as:
\begin{equation}
	I_k=\left\{ \tau\in[\lambda-1,\lambda+1]:k|\mu|\leq|\tau-\lambda|\leq(k+1)|\mu|
	\right\},\, k=0,1,\dots
	\label{ik}
\end{equation}
then take sum using \eqref{nonlocal.multiplier} and
\eqref{mu.p.q} to see
\begin{equation}\begin{split}
	&||\sum_{|\lambda-\lambda_j|\leq1}r_{\lambda,\mu}(\lambda_j)E_jf||_{L^q(M)}\\
	\leq&C\lambda^{-1}\lambda|\mu|\sum_{k}\left[ (1+|\mu| k)^{-N}+|\mu|^{-1}(1+k)^{-N}
	\right]||f||_{L^p(M)}\\
	\leq&C||f||_{L^p(M)}.
\end{split}
	\label{equivalence.last}
\end{equation}

\end{proof}
\begin{remark}
	When points $(1/p,1/q)$ are off the range $\overline{AA'}$ in figure
	\ref{figure}, the above technique we used to show the boundedness of the non-local
	operator will not work. In fact, when
	$2(n+1)/(n+3)<p\leq 2$ we have the following spectral projection
	estimates
	\begin{equation}
		||\chi_{\lambda}f||_{L^2(M)}\leq
		C(1+\lambda)^{\frac{(n-1)(2-p)}{4p}}||f||_{L^p(M)}.
		\label{strange.estimate}
	\end{equation}
	Now assume that we have some $(1/p,1/q)$ being on the Sobolev line but on the left hand
	side of point $A$, then after the $L^p\to L^2$ and $L^2\to L^q$ composition we  have
	\begin{equation}
		||\chi_{\lambda}f||_{L^q(M)}\leq
		C(1+\lambda)^{\frac{(n-1)(2-p)}{4p}+\delta(1/(\frac{1}{p}-\frac{2}{n}))}||f||_{L^p(M)}\label{p.q.estimate}
	\end{equation}
	and an easy calculation  shows that now the power of $\lambda$ is within
	$(1,3/2]$, which is not sufficient to control the non-local operator as the readers
	have seen. However, unlike the $L^p\to L^{p'}$ estimate, a general $L^{p}\to
	L^{q}$ estimate for
	$p$ and $q$ being off the line of duality obtained by composing two projections
	together may not be sharp. Therefore further improvement on the range
	$\overline{AA'}$ may still be possible.
	In fact, if we assume $M=S^3$ and $\mu=1$, then at the
	end point $p=4/3$ and $q=12$ the resolvent estimates now read as
	\begin{equation}
		||f||_{L^{12}(S^3)}\leq C
		||(-\Delta_{S^3}+(\lambda+i)^2)f||_{L^{\frac{4}{3}}(S^3)}.
		\label{s3}
	\end{equation}
	If we let $f$ be an arbitrary $L^2$ normalized eigenfunction $e_{\lambda}$ corresponding to
	eigenvalue $\lambda$, then we ought to have
	\begin{equation}
		||e_{\lambda}||_{L^{12}(S^3)}\lesssim\lambda||e_{\lambda}||_{L^{\frac{4}{3}}(S^3)}.
		\label{eigen.3}
	\end{equation}
	A further testing with spherical harmonics and zonal functions indicate that the
	inequalities above are not saturated, which casts some doubt on the sharpness of the
	admissible range for $(p,q)$. But right now we are not able to prove, nor disprove
	the sharpness of that range even for round spheres. As Sogge pointed out to us 
	such difficulties were encountered during his study on Bochner-Riesz means
	in \cite{Soggethesis}, \cite{sogge2}, which may indicate that to prove or disprove
	the sharpness of the range $\overline{AA'}$ in our problem might be substantially difficult.
	
	\label{riesz.means}	
\end{remark}

\section{Non-positive curvature manifolds}

Recall that in \cite{bssy} we proved the following result
\begin{equation}
	||\sum_{|\lambda_j-\lambda|<1/\log\lambda} E_jf||_{L^{\frac{2n}{n-2}}(M)}\leq
	C(\log\lambda)^{-1}\lambda||f||_{L^{\frac{2n}{n+2}}(M)},\quad\lambda\gg1,
	\label{old.neg.result}
\end{equation}
which is a special case of a recent unpublished estimate of Hassell and Tacey, 
also is an $L^p$ variant of earlier supernorm bounds implicit of B\'{e}rard \cite{berard}.
Now we shall prove the related result:
\begin{theorem}
	If $(M,g)$ is a compact manifold of dimension $n\geq3$ with non-positive
	sectional curvature then we have, when $p<\frac{2(n+1)}{n+3}$
	\begin{equation}
		||\sum_{|\lambda_j-\lambda|<1/\log\lambda} E_jf||_{L^2{(M)}}\leq
		C(\log\lambda)^{-\frac{1}{2}}\lambda^{\delta(p)}||f||_{L^p{(M)}},\quad\lambda\gg1.
		\label{new.neg.result}
	\end{equation}
	\label{neg.theorem}
\end{theorem}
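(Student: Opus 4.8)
The plan is to turn \eqref{new.neg.result} into an $L^p\to L^{p'}$ bound for an honest spectral projector and then run the short-time/long-time split already used in Sections 2 and 3. Fix a real even $\chi\in\mathcal S(\mathbb R)$ with $\hat\chi$ supported near the origin and $|\chi|\geq1$ on $[-1,1]$ (take $\chi=\hat\phi^{\,2}$ for an even bump $\phi$ and rescale), put $\Psi=\chi^2$ and $T=\log\lambda$. Since $\Psi(T(\lambda-\lambda_j))\geq1$ whenever $|\lambda-\lambda_j|\leq1/T$, orthogonality gives $\|\sum_{|\lambda_j-\lambda|<1/T}E_jf\|_{L^2}\leq\|\chi(T(\lambda-P))f\|_{L^2}$; as $\chi$ is real, $\chi(T(\lambda-P))$ is self-adjoint, so the $TT^*$ identity yields $\|\chi(T(\lambda-P))\|_{L^p\to L^2}^2=\|\Psi(T(\lambda-P))\|_{L^p\to L^{p'}}$. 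Because $\delta(p')=\delta(p)$ on the line of duality, it therefore suffices to prove
\begin{equation*}
	\|\Psi(T(\lambda-P))\|_{L^p\to L^{p'}}\leq C(\log\lambda)^{-1}\lambda^{2\delta(p)},\qquad p<\tfrac{2(n+1)}{n+3}.
\end{equation*}

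Next I would Fourier-invert, $\Psi(T(\lambda-P))=\frac1{2\pi T}\int\hat\Psi(t/T)e^{i\lambda t}e^{-itP}\,dt$ (the $t$-support being $|t|\lesssim\log\lambda$), and insert the cutoff $\rho$ of \eqref{local_operator_definition} to write $\Psi(T(\lambda-P))=L_\lambda+G_\lambda$. On $\mathrm{supp}\,\rho$ one has $\hat\Psi(t/T)=\hat\Psi(0)+O(1/T)$, and repeated integration by parts in $t$ shows that the multiplier of $L_\lambda$ is $O_N\big(T^{-1}(1+|\lambda-\tau|)^{-N}\big)$; breaking the spectrum of $P$ into unit clusters and composing the estimates \eqref{spectral1}--\eqref{spectral2}, exactly as in the proof of Lemma \ref{lemma2.3.1}, yields $\|L_\lambda\|_{L^p\to L^{p'}}\lesssim T^{-1}\lambda^{2\delta(p)}$ for $p\leq2(n+1)/(n+3)$. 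This is the step that forces the upper endpoint of the range.

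The core of the argument is the long-time piece $G_\lambda$, where non-positive curvature enters. Lift to the universal cover $(\widetilde M,\widetilde g)$, a Cartan--Hadamard manifold on which $\cos t\widetilde P$ has a globally valid Hadamard parametrix, and use $\cos tP(x,y)=\sum_{\gamma\in\Gamma}\cos t\widetilde P(\widetilde x,\gamma\widetilde y)$, the sum restricted by finite propagation to $d_{\widetilde g}(\widetilde x,\gamma\widetilde y)\leq c\log\lambda$. Non-positivity keeps the Hadamard coefficients of polynomial growth (G\"unther's volume comparison, cf.\ \cite{berard} and \cite{bssy}); carrying out the $t$-integral against $\hat\Psi(t/T)e^{i\lambda t}$ localizes frequency to a band of width $\sim1/T$ about $\lambda$ and makes each $\gamma$ contribute a spherical wave of size $\lesssim\lambda^{(n-1)/2}d_{\widetilde g}(\widetilde x,\gamma\widetilde y)^{-(n-1)/2}$ supported in $1\lesssim d_{\widetilde g}(\widetilde x,\gamma\widetilde y)\leq c\log\lambda$. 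Summing this dyadically with the bound $\mathrm{Vol}(B_r)\leq Ce^{C_0r}$ gives $\|G_\lambda\|_{L^1\to L^\infty}\lesssim(\log\lambda)^{-(n+1)/2}\lambda^{(n-1)/2+\epsilon}$, where $\epsilon=\epsilon(c)\to0$ as $c\to0$, while the multiplier of $G_\lambda$ is plainly $O(1)$, so $\|G_\lambda\|_{L^2\to L^2}\lesssim1$. Interpolating along the line of duality ($1/p=(1+\theta)/2$, so that $2\delta(p)=n\theta-1$) gives $\|G_\lambda\|_{L^p\to L^{p'}}\lesssim(\log\lambda)^{-\theta(n+1)/2}\lambda^{\theta((n-1)/2+\epsilon)}$, and a short computation shows this is $\leq C(\log\lambda)^{-1}\lambda^{2\delta(p)}$ as soon as $\theta\geq2/(n+1-2\epsilon)$; for any $p<2(n+1)/(n+3)$, equivalently $\theta>2/(n+1)$, this holds once $c$ (hence $\epsilon$) is taken small enough. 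Adding the two bounds gives the displayed estimate, hence \eqref{new.neg.result}.

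I expect the genuinely delicate part to be the third step: controlling $\cos t\widetilde P$ on the universal cover out to the Ehrenfest-type time $c\log\lambda$ and bookkeeping the deck-transformation sum so that the loss $\lambda^{C_0c}$ stays absorbable — essentially the analysis behind B\'erard's sup-norm bounds and \eqref{old.neg.result}. The remaining ingredients (the $TT^*$ reduction, the Fourier split, and the treatment of $L_\lambda$ via Sogge's cluster estimates) are routine and mirror Sections 2 and 3.
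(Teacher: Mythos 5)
Your argument is correct and follows essentially the same route as the paper: a $TT^*$ reduction to an $L^p\to L^{p'}$ bound for a Schwartz multiplier of width $(\log\lambda)^{-1}$, a short-time/long-time split with the short-time piece handled by the unit-cluster estimates of Lemma \ref{lemma2.3.1} and the long-time piece by lifting to the universal cover, the exponential volume bound, and interpolation of the resulting $L^1\to L^\infty$ bound with the trivial $L^2$ bound, using $p<2(n+1)/(n+3)$ to absorb the $\lambda^{\epsilon}$ loss. The only cosmetic difference is that you keep $T=\log\lambda$ and shrink the support of $\hat\chi$ to make the propagation distance $c\log\lambda$ small, whereas the paper fixes the support of $\hat a$ and works with $\varepsilon\log\lambda$; these are equivalent.
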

By Theorem \ref{equivalence} this will immediately proves Theorem
\ref{neg.result}. 

Before we go through the details of the proof, we want to point out that if we can prove
\eqref{l2.neg} with $\log\lambda$ replaced by $\varepsilon \log\lambda$ in which
$\varepsilon $ is smaller than 1 and fixed, and only depends on $M$ and $p$, then $L^2$
orthogonality would immediately show that \eqref{new.neg.result} is proved with a larger constant $C$
possibly depending on $p$. But certainly this is harmless for us.

So we need to prove \eqref{new.neg.result} with $\varepsilon \log\lambda$, in which the
specific value of $\varepsilon $ is about to be determined later. First, 
we claim that if we choose an even nonnegative function
$a\in\mathcal{S}(\R)$ satisfying $a(r)=1, |r|\leq 1/2$ and having its Fourier transform supported in
$(-1,1)$, then in order to \eqref{new.neg.result} it is sufficient to prove that for the
multiplier
\begin{equation}
	a(\varepsilon \log\lambda(\lambda-P))=\frac{1}{2\pi\varepsilon \log\lambda}\int \hat
	a(t/(\varepsilon \log\lambda))e^{it\lambda}e^{-itP}dt
	\label{neg.multipler}
\end{equation}
we have\begin{equation}
	||a(\varepsilon \log\lambda(\lambda-P))f||_{L^{p'}(M)}\leq
	(\varepsilon \log\lambda)^{-1}\lambda^{2\delta(p)}||f||_{L^p(M)},\quad p<\frac{2(n+1)}{n+3}.
	\label{new.multiplier.estimate}
\end{equation}
In fact, due to the non-negativity of $a(r)$ especially $a(r)\approx1$ when $r$ is near $0$, we know that
if we use the fact $a=(\sqrt{a})^2$
then a $TT^*$ argument and the above estimate will
immediately imply that 
\begin{equation}
		||\sum_{|\lambda_j-\lambda|<1/(\varepsilon \log\lambda)} E_jf||_{L^2{(M)}}\leq
		C(\varepsilon \log\lambda)^{-\frac{1}{2}}\lambda^{\delta(p)}||f||_{L^p{(M)}}\label{l2.neg}
\end{equation}
due to $L^2$ orthogonality.

We then proceed in the way that we proved the estimates on the local operator
$\local(P)$, say breaking the $t$ interval into one part  
when $t\leq 1$ and the other one when $1\leq t\lesssim \log\lambda$ (c.f. \eqref{S_j} and
\eqref{S_0}).


Now, if $\psi\in C^{\infty}(\R^1)$ is an even function
and $\psi(r)=1$ when $|r|>2$ and $\psi(r)=0$ then we claim
that the operator defined as
\begin{equation*}
	b_{\lambda}(P)=\frac{1}{2\pi\loga}\int
	(1-\psi(t))\hat{a}({t}/({\loga}))e^{i(\lambda-P)t}dt
\end{equation*}
satisfies the estimate in \eqref{new.multiplier.estimate} when $p\leq 2(n+1)/(n+3)$.
This can be proved very easily by the $L^p\to L^{p'}$ version of Lemma \ref{lemma2.3} and
the fact that 
\begin{equation*}
	|b_{\lambda}(\tau)|\leq C{(\loga)}^{-1}(1+|\lambda-\tau|)^{-N}.
\end{equation*}
So we need only to consider 
\begin{equation}
	\frac{1}{2\pi\loga}\int(1-\psi(t))\hat{a}(t/(\loga))e^{i(\lambda-P)t}dt.
\end{equation}
To proceed, we need to replace the $e^{-itP}$ in the integrand above by
$\cos{tP}$ since we are going to use the latter's Huygens principle. Now, notice that since both $\psi$ and $a$ are even functions,
the difference between the operator in the above formula and
\begin{equation*}
	\frac{1}{2\pi\loga}\int\psi(t)\hat{a}(t/(\loga))e^{i\lambda t}\cos tPdt.
\end{equation*} which
is a smoothing operator with size of $O(\lambda^{-N})$, as $P$ is a positive
operator. So we are reduced to prove that
if we let $a_{\lambda}(P)$ denote
\begin{equation*}
	a_{\lambda}(P)=\int\nega e^{i\lambda t}\cos tPdt
\end{equation*}
then
\begin{equation}
	||a_{\lambda}(P)f||_{L^{p'}(M)}\leq C\lambda^{2\delta(p)}||f||_{L^p(M)},\quad
	p<2(n+1)/(n+3).
	\label{need.to.prove}
\end{equation}
We are going to use interpolation to prove \eqref{need.to.prove}. More specifically we
want to prove that
\begin{equation}
	||a_{\lambda}(P)||_{L^1\to L^{\infty}}\leq C\log\lambda e^{c\loga}\lambda^{(n-1)/2}
	\label{1.to.infty}
\end{equation}
in which $c$ is a small number depending on the geometry of the manifold $M$
and
\begin{equation}
	||a_{\lambda}(P)||_{L^2\to L^2}\leq C\log\lambda.
	\label{2.to.2}
\end{equation}
The second one is obvious since we have the fact that $\cos tP$ is a bounded $L^2$
operator. The first one was already proved in \cite{bssy} by using the fact that
\begin{equation*}
	\cos t\sqrt{-\Delta_g}(x,y)=\sum_{\gamma\in\Gamma}\cos t\sqrt{-\Delta_{\tilde g}}(x,\gamma y),x,y\in D
\end{equation*}
in which $D\subset \R^n$ is a fundamental domain of the universal covering map $\Pi:\R^n\to
M$, $\Gamma$ is the fundamental group, or the deck transform group of $M$, and $\tilde g$
is the pull-back of Riemannian metric by $\Pi$. 

Now we just need to do the interpolation. By the fact that
$\log\lambda\in o(\lambda^{\varepsilon })$ for an arbitrarily small $\varepsilon >0$,
after interpolation we have when $p<2(n+1)/(n+3)$ that there is a number 
\begin{equation*}
	\varepsilon (p)=(n+1)(\frac{1}{p}-\frac{n+3}{2(n+1)})>0
\end{equation*}
so that 
\begin{equation*}
	||a_{\lambda}(P)||_{L^{p'}(M)}\leq C\lambda^{2\delta(p)}
	\lambda^{\frac{2-p}{p}c\varepsilon _1-\varepsilon (p)}||f||_{L^p(M)}\leq
	C\lambda^{2\delta(p)}||f||_{L^p(M)}
\end{equation*}
if we choose $\varepsilon _1$ small enough according to $\varepsilon (p)$ and $c$. So
\eqref{need.to.prove} is proved.  Notice that due to the appearance of $\log\lambda$ we
are not able to prove the end point estimate when $p=2(n+1)/(n+3)$. In fact, by using
Lemma \ref{lemma2.3.1} we showed in \cite{bssy} that when $p=2(n+1)/(n+3)$ we can only
obtain
the following bound
\begin{equation*}
	||a_{\lambda}f||_{{L^{\frac{2(n+1)}{n-3}}}(M)}\leq
	C\lambda^{\frac{n-1}{n+1}}\log\lambda||f||_{L^{\frac{2(n+1)}{n+1}}(M)}
\end{equation*}
which is $\log\lambda$ worse than the \eqref{need.to.prove}.

\section{Torus $\torus$}

In this section we are going to prove Theorem \ref{torus.theorem} in a similar way
to the non-positive curvature manifold case in the previous section.
In fact, by an argument similar to the one prior to
\eqref{need.to.prove}, we need only to study that if we define an operator for
$0<\varepsilon (p)\leq1$ as the following
\begin{equation}
	a_{\lambda}(P)=\int\psi(t)\hat{a}(t/\lambda^{\varepsilon (p)})e^{it\lambda}\cos
	tPdt,
	\label{torus.multipler.expression}
\end{equation}
then we can have
\begin{equation}
	||a_{\lambda}(P)f||_{L^{p'}(\torus)}\leq \lambda^{2\delta(p)}||f||_{L^p(\torus)},
	p<\frac{2(n+1)}{n+3}.
	\label{need.to.prove.torus}
\end{equation}
Here as before $\psi(t)$ is a smooth function with support outside $(-4,4)$ and equals to one
$|t|>10$, and $\hat{a}(t)$ is supported in $(-1,1)$. Both functions are even, similar to the
non-positive curvature manifold case. 

As we have seen in the proof to Theorem \ref{equivalence}, if we can find such a value of
$\varepsilon (p)$ which satisfies \eqref{need.to.prove.torus}, then any smaller positive
number than $\varepsilon (p)$ will do as well. Therefore
in the following argument, we can simply focus on finding a largest possible $\varepsilon (p)$
based on an interpolation argument similar to the one used in previous section.

Now we are going to prove the following estimates to interpolate with:
\begin{equation}
	||a_{\lambda}f||_{L^{\frac{2(n+1)}{n-1}}(\torus)}\leq C
	\lambda^{\frac{n-1}{n+1}}\lambda^{\varepsilon (p)}||f||_{L^{\frac{2(n+1)}{n+3}}(\torus)},
	\label{torus.interpolation.1}
\end{equation}
and
\begin{equation}
	||a_{\lambda}f||_{L^{\infty}(\torus)}\leq C\lambda^{\frac{n-1}{2}}\lambda^{\varepsilon
	(p)\frac{n+1}{2}}||f||_{L^{1}(\torus)}.
	\label{torus.interpolation.2}
\end{equation}
The first one is an easy application of Lemma \ref{lemma2.3.1} so we need only to prove
the second one. In \cite{bssy} we presented the proof in $n=3$ case and the general
dimensional case is similar. For the readers' convenience we shall sketch it now. 

Recall that if we identify $\torus$ with its fundamental domain
$Q=(-\frac{1}{2},\frac{1}{2}]^n$ in $\R^n$, then we have
\begin{equation}\label{torus.half.wave}
	\cos t\sqrt{-\Delta_{\torus}}(x,y)=\sum_{j\in \Z^n} \cos t\sqrt{-\Delta_{\R^n}}(x-y+j),\quad x,y\in Q
\end{equation}
then by the finite speed of propagation of $\cos tP$ we are reduced in estimating the size
of the following integral
\begin{equation}\begin{split}
	A_1(x,y)&=
\sum_{\substack{j\in\Z^n\\|j|\leq\lambda^{\varepsilon
	(p)}}}\int\int
	e^{i(x-y+j)\cdot\xi}\psi(t)\hat{a}(t/\lambda^{\varepsilon (p)})e^{it\lambda}\cos
	t|\xi|dtd\xi\\
	&=
	\sum_{\substack{j\in\Z^n\\|j|\leq\lambda^{\varepsilon
	(p)} \\ |x-y+j|\geq1}}\int\int
	e^{i(x-y+j)\cdot\xi}\psi(t)\hat{a}(t/\lambda^{\varepsilon (p)})e^{it\lambda}\cos
	t|\xi|dtd\xi\\
	&\hspace{1cm}
	+\sum_{\substack{j\in\Z^n\\|j|\leq\lambda^{\varepsilon
	(p)} \\ |x-y+j|<1}}\int\int
	e^{i(x-y+j)\cdot\xi}\psi(t)\hat{a}(t/\lambda^{\varepsilon (p)})e^{it\lambda}\cos
	t|\xi|dtd\xi\\
	&=(I)+(II).
	\end{split}
	\label{A_1}
\end{equation}
Notice that $(II)$ will disappear in odd dimension due to Huygens principle. Nonetheless
it does not cause any harm in even dimensions neither due to the following simple argument.
In fact, due to our choice on the fundamental domain $Q$, there are only $O(2^n)$ many
terms non-vanishing in $(II)$, so by Euler's formula we need only to prove that 
\begin{equation}
	\int\int_{\R^n}e^{i(x-y+j)\cdot\xi}\psi(t)\hat{a}(t/\lambda^{\varepsilon
	(p)})e^{it(\lambda\pm|\xi|)}d\xi dt\in O(\lambda^{-N}),\quad |x-y+j|<1.
	\label{II}
\end{equation}
Integrating by parts with respect to $t$ shows that we
need only  prove \eqref{II} when an extra cut-off function $\beta({|\xi|/\lambda})$ is
inserted in the
integrand for function $\beta$ is as in . Then due to the fact that in the support of integrand we have $|t|>4$, another
integration by parts in $\xi$ variable completes the proof.

Now we notice that in $(I)$ if we replace $\psi(t)$ by $1-\psi(t)$ we will end up
with $O(2^n)$ many integrals like
\begin{equation*}
	\int_{\R^n}e^{i(x-y+j)\cdot\xi}\left(\Psi(\lambda-|\xi|)+\Psi(\lambda+|\xi|)\right)d\xi,\quad
	|x|>1,
\end{equation*}
in which $\Psi(r)$ is a Schwartz function. Now using polar coordinates $\xi\to r\omega$
will immediately show that these integrals are in $O(\lambda^{\frac{n-1}{2}})$, which
is better than \eqref{torus.interpolation.2}. So we need only to prove
\begin{equation}
	\sum_{\substack{j\in\Z^n\\|j|\leq\lambda^{\varepsilon
	(p)}\\|x-y+j|>1}}|\int_{\R^n}e^{i(x-y+j)\cdot\xi}\lambda^{\varepsilon
	(p)}a(\lambda^{\varepsilon (p)}(\lambda\pm|\xi|))d\xi|\leq
	\lambda^{\frac{n-1}{2}+\varepsilon (p)\frac{n+1}{2}}.
	\label{final}
\end{equation}
This can be proved immediately if again we use polar coordinates and the decay estimate of
the Fourier transform of the spheres.

After proving \eqref{torus.interpolation.1} and \eqref{torus.interpolation.2} we can now
do the interpolation. Let $0\leq t\leq1$ be determined by the following equation 
\begin{equation}
	t+(1-t)\frac{n+3}{2(n+1)}=\frac{1}{p}, p\leq \frac{2(n+1)}{n+3},
	\label{t.equation}
\end{equation} 
then the interpolation shows that we have
\begin{equation}
	||a_{\lambda}f||_{L^{p'}(\torus)}\leq
	\lambda^{\frac{n-1}{n+1}(1-t)+\varepsilon(p)(1-t)+\frac{n-1}{2}t+\varepsilon
	(p)\frac{n+1}{2}t}||f||_{L^p(\torus)}, p\leq\frac{2(n+1)}{n+3}.
	\label{interpolation.a.lambda}
\end{equation}
So we need only to solve for a positive $\varepsilon (p)$ as the largest possible value which
satisfies \eqref{need.to.prove.torus} when $t>0$ from the following
equation
\begin{equation}
	\varepsilon (p)=\frac{
	\frac{n(n-1)t+n-1}{2(n+1)}-\frac{n-1}{n+1}(1-t)-\frac{n-1}{2}t}{\frac{n+1}{2}t+1-t},
	\, t=\frac{2(n+1)}{n-1}\frac{1}{p}-\frac{n+3}{n-1}.
	\label{epsilon.p}
\end{equation}
An elementary derivative test shows that this function is increasing when $t\in[0,1]$ and
$\varepsilon (2(n+1)/(n+3))=0$, which coincides with our interpolation as the
$(1,\infty)$ endpoint
has a better bound. In particular when $p=2n/(n+2)$ we have $\varepsilon
(p)=1/(n+1)$. 

Now what we need is simply to compose the projections between $L^p\to L^2$ and $L^2\to
L^q$. Here we just need to choose the weaker estimates during the composition, say for
a general $(1/p,1/q)$ pair in the $\overline{AA'}$ admissible range we just choose
$\varepsilon (p)$ when $(1/p,1/q)$ is below the line of duality, and $\varepsilon (q')$ when
it is above it. So not surprisingly the improvement is symmetric with respect to the line
of duality. The closer the exponent $(1/p,1/q)$ is to the middle point $F$ in figure \ref{figure}, 
the better the improvement we can have.


\end{document}